\let\bs\boldsymbol
\let\cal\mathcal
\def\R{\mathbb{R}}
\def\HH{\mathbb{H}}
\newcommand{\qi}{\mathbf i}
\newcommand{\qj}{\mathbf j}
\newcommand{\qk}{\mathbf k}
\DeclareMathOperator{\re}{Re}
\DeclareMathOperator{\V}{Vec}
\newcommand{\doprodstar}[1]{\ooalign{\ooalign{$#1\prod$\cr\hfil$#1\ast$\hfil\cr}\cr\hfil$#1\phantom\prod$\hfil\cr}}
\newcommand{\prodstar}{\mathop{\mathrel{\mathpalette\doprodstar\relax}}}
\newtheorem{theorem}{Theorem}
\newtheorem{lemma}{Lemma}
\newtheorem{example}{{\bf Example}}
\newtheorem{remark}{Remark}
\begin{document}
%\begin{center}%
%\vskip 1.5em%
\title{Weierstrass method for quaternionic polynomial root-finding}%

\author{M.~Irene Falc\~{a}o\\
CMAT and DMA\\ University of Minho, Portugal\\
mif@math.uminho.pt\\
\and Fernando Miranda\\
CMAT and DMA\\ University of Minho, Portugal\\
fmiranda@math.uminho.pt\\
\and Ricardo Severino\\
DMA\\ University of Minho, Portugal\\
ricardo@math.uminho.pt\\
\and M. Joana Soares\\
NIPE and DMA\\ University of Minho, Portugal\\
jsoares@math.uminho.pt
}

\maketitle

\begin{abstract}
Quaternions, introduced by Hamilton in 1843 as a generalization of complex numbers, have found, in more recent years, a wealth of applications in a number of different areas which motivated the design of efficient methods for numerically approximating the zeros of quaternionic polynomials. In fact, one can find in the literature recent contributions to this subject based on the use of complex techniques, but numerical methods relying  on quaternion arithmetic remain scarce. In this paper we propose a Weierstrass-like method for finding simultaneously {\sl all} the zeros of unilateral quaternionic  polynomials. The convergence analysis and several numerical examples illustrating the performance of the method are also presented.

\end{abstract}

\noindent {\slshape Keywords:}
Quaternionic polynomials $\cdot$ Root-finding methods $\cdot$ Weierstrass algorithm

\section{Introduction}
The increasing interest in using quaternions and their applications in areas as diverse as number theory, robotics, virtual reality or image processing (see e.g. \cite{BrackxHitzerSangwine2013,Farouki2017,PereiraRocha2008,PereiraRochaVettori2005,PereiraVettori2006}), motivated several authors to consider extending well-known (complex) numerical methods, in particular root-finding methods, to the quaternion algebra framework.
However, the problem of finding the zeros of quaternionic polynomials turns out to be much more demanding than the analogous problem over the real and complex fields. 
Niven, in his pioneering work, \cite{Niven1941}, gave a first extension of the Fundamental Theorem of Algebra for the quaternion context, proving that any quaternionic polynomial of positive degree whose coefficients are located only on one side of the powers must have at least one quaternionic root. In the aforementioned paper, Niven also proposed a method for computing the roots of such polynomials. This algorithm is, however, as stated by Niven a ``not very practical'' one, due to the need of solving two coupled nonlinear equations for the determination of pairs of real constants. Later, in \cite{SerodioPereiraVitoria2001}, the authors, by making use of (a complexified version of) the companion matrix of the polynomial, turned the ideas of Niven into what can be considered as the first really usable numerical algorithm. 
    
Nowadays, other quaternionic root-finding algorithms are available which essentially replace the problem of computing the roots of a quaternionic polynomial of degree $n$, by the problem of determining the roots of a real or complex polynomial of degree $2n$ (usually with multiple roots), relying in this way on algorithms for complex polynomial root-finding  (see \cite{DeLeoDucatiLeonardi2006,SerodioSiu2001} and the references therein). 
Several experiments performed by two of the authors of this paper (\cite{Falcao2014,MirandaFalcao2014b}) have shown the substantial gain in computational effort that can be achieved when using a direct quaternionic approach to this problem.
    
The Weierstrass method, also known in the literature as the Durand-Kerner method or Dochev method, is one of the most popular iterative methods for obtaining simultaneously approximations
to all the roots of a given polynomial with complex coefficients (for a survey on most of the traditional methods for root-finding we refer to \cite{McNamee2007}). The formula involved in the method was first proposed by Weierstrass \cite{Weierstrass1891}, in connection with a constructive proof of the Fundamental Theorem of Algebra, and later rediscovered and derived  in different ways by Durand \cite{Durand1960}, Dochev \cite{Dochev1962} and Kerner \cite{Kerner1966}, among others. 
	
The main purpose of this paper is to present an adaptation of the Weierstrass method to the case of quaternionic polynomials.   
By making use of the so-called Factor Theorem for quaternions we derive an iterative method which shows fast convergence and robustness with respect to the initial approximations.

The paper is organized as follows: in Section 2 we review some basic results on the algebra of real quaternions and on quaternionic polynomials;  Section~3 contains the main results of the paper; 
after revisiting the classical (complex) Weierstrass method we derive a generalization to the quaternionic case and prove, under some natural assumptions, its  quadratic order of convergence;
in Section 4 we present several numerical experiments illustrating the results obtained  in Section 3;  finally, in  Section 5 we draw some conclusions and indicate some future work.

\section{Basic results on quaternions}
In this section we present a brief summary on the main results on the algebra of real quaternions and on the ring of polynomials over the quaternions needed 
in the sequel.
\subsection{The algebra of real quaternions}

Let $\{1, \qi, \qj,\qk\}$ be an orthonormal basis of the Euclidean vector space $\R^{4}$ with a product given
according to the multiplication rules
\begin{equation*}
\qi^{2}=\qj^{2}=\qk^{2}=-1,\;\;\qi\qj=-\qj\qi=\qk.
\end{equation*}
This non-commutative product generates the well known algebra of real quaternions $\HH$. 

Given a quaternion $q= q_0+q_1\qi+q_2\qj+q_3\qk \in \HH$,
its {\textit{conjugate}} $\overline{q}$ is defined as 
$\overline{q}=q_0-q_1\qi-q_2\qj-q_3\qk$;
the number $q_0$ is called the {\textit{real}} {\textit{part}} of $q$ and denoted by $\re q$ and 
the {\textit{vector part}}   of $q$, denoted by $\V q$, is given by $\V q= q_1\qi+q_2\qj+q_3\qk$;
the {\textit{norm}} of $q$,  $|q|$, is given by
$|q|=\sqrt{q \overline{q}}=\sqrt{q_0^2+q_1^2+q_2^2+q_3^2};$
the {\textit{inverse}} of $q$ (if $q \ne 0$), denoted by $q^{-1}$ is the (unique) quaternion such that
$q q^{-1}= q^{-1} q=1$ and is given by $q^{-1} = \dfrac{\overline{q}}{|q|^2}$.

We say that a  quaternion  $q$ is {\textit{congruent}} to a quaternion $q'$, and  write  
$q\sim q'$, if there exists a non-zero quaternion $h$ such that $q'=h q h^{-1}$.
This is an equivalence relation 
in $\HH$, partitioning $\HH$ in the so-called {\textit{congruence classes}}.  We denote by $[q]$ the congruence class containing  a given quaternion $q$.
It can  be shown (see, e.g. \cite{Zhang1997}) that
\begin{equation}\label{condCongruenceClass}
 [q] =\left\{ q'\in \HH: \re q=\re q' {\text{ and }} |q|=|q'| \right\}.
\end{equation}
It follows that $[q]$ reduces to a single element  if and only if $q $  is a real number. If $q=q_0+q_1\qi+q_2\qj+q_3\qk$ is not real, its congruence class can be identified with the three-dimensional sphere in the hyperplane  $\{(x_0,x_1,x_2,x_3)\in\R^4:x_0=q_0\}$, with center $(q_0, 0, 0, 0)$ and radius $\sqrt{q_1^2+q_2^2+q_3^2}$.

\subsection{Ring of left quaternionic polynomials}

Because of the non-commutativity of quaternion multiplication, one can consider different classes of polynomials in one quaternion variable, depending on whether the variable commutes with the polynomial coefficients or not. General polynomials in the indeterminate $x$ are defined as finite sums of non-commutative monomials of the form $a_0xa_1\dots x a_j$. 
In this work we restrict our attention to polynomials whose coefficients are located only on the left-hand side of the powers of $x$, i.e. have the special form 
\begin{equation}
\label{oneSidedPols}
P(x)=a_n x^n +a_{n-1} x^{n-1} + \cdots+a_1x+a_0, \ \ a_i \in \HH; \ \  i=0,\ldots, n.
\end{equation}
These polynomials are usually called in the literature 
{\textit{one-sided}} or {\textit{unilateral}} ({\textit{left}}) polynomials. 
As usual, if $a_n \ne 0$, we will say that the {\textit{degree}} of the polynomial $P(x)$ is $n$ and  refer to $a_n$ as the leading coefficient of the polynomial. 
When $a_n=1$, we say that $P(x)$ is {\textit{monic}}.
If the coefficients $a_i$ in \eqref{oneSidedPols} are real, then we say that $P(x)$ is a {\textit{real polynomial}} and write $P(x) \in \mathbb{R}[x].$

The set of polynomials of the form \eqref{oneSidedPols} is a ring with respect to  the operations of  addition and multiplication defined as in the commutative case: for any two polynomials
$P(x)=\sum_{i=0}^n a_i x^i$ and $Q(x)=\sum_{j=0}^m  b_j x^j$,
\begin{align*}
P(x)+Q(x):=& \sum_{k=0}^{\max\{m,n\}} (a_k+b_k)x^k,\\
P(x)\ast Q(x) :=& \sum_{k=0}^{m+n} \Bigl(\sum_{j=0}^k a_j b_{k-j}\Bigr) x^k,
\end{align*}
with the implicit assumption that $a_k= 0$ for $k>n$ and $ b_k=0$ for $ k>m$.
We will denote this ring of polynomials by $\HH[x]$.
Naturally, due to the non-commutativity of the quaternionic multiplication, $\HH[x]$ is  a non-commutative ring. However, if $P(x)$ is a real polynomial, then $P(x)$ commutes with any polynomial in  $\HH[x]$.

We should also observe that the {\textit{evaluation map}} at a given quaternion $q$, defined, for the  polynomial  $P(x)$ given by \eqref{oneSidedPols}, by 
\begin{equation*}
P(q)=a_n q^n +a_{n-1} q^{n-1} + \cdots+a_1q+a_0,
\end{equation*}
is not a homomorphism from the ring $\HH[x]$ into $\HH$.
In fact, $P(x)=L(x)\ast R(x)$ does not lead, in general, to $P(q)=L(q) R(q)$.
\begin{remark}
Since all the polynomials considered will be in the indeterminate $x$, we will usually omit the reference to this variable and write simply
$P$ when referring to an element  $P(x) \in \HH[x]$, the expression  $P(q)$ being preferably reserved for the evaluation of $P$ at a specific value $q \in \HH$.
\end{remark}
We say that a quaternion $q$ is a {\textit{zero}} of a polynomial $P$, if $P(q)=0$, and   
we  use the notation ${\mathbf{Z}}_P$ to denote the {\textit{zero-set}} of $P$, i.e. the set of all the zeros of $P$.
Since this work is concerned with the computation of  zeros of polynomials, there is no loss of generality in assuming that the polynomials are monic and we will do so in what follows.

We now review some basic properties of unilateral (left) quaternion polynomials needed in the sequel. 

The next theorem shows a way of evaluating the product of two polynomials at a given quaternion, without explicitly performing their product. 
The proof of the first two results can be seen in e.g. \cite{Lam1991} and the last result is a simple consequence of the definition of the product of polynomials  and of the fact that any real number commutes with a quaternion. 

\begin{theorem}\label{teonotrootR}
Let $P= L\ast R$ with $L, R\in \HH[x]$, $q\in \HH$ and $h=R(q)$. 
\begin{enumerate}
\item[(i)] If $h=0$,  then  $P(q)=0$ (i.e. if $q$ is a zero of the right factor $R$, then $q $ is also a zero of the product  $P$).
\item[(ii)]
If $h\ne 0$, then
\begin{equation}\label{evalProduct}
P(q)=L(\tilde{q}) R(q) \quad {\text{with}} \quad \tilde{q}=h q h^{-1}.
\end{equation}
In particular, if $q$  is a zero of $P$ which is not a zero of $R$, then 
$\tilde{q}$ is a zero of $L$. 
\item[(iii)]
If $L\in \mathbb{R}[x]$, then 
\begin{equation}\label{evalProductReal}
P(q)= R(q)L(q).
\end{equation}
\end{enumerate}
\end{theorem}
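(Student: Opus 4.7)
The plan is to expand $P(q)$ directly from the Cauchy-product definition of $L\ast R$ and exploit the single non-trivial commutativity available in $\HH$, namely that powers of a single fixed quaternion $q$ commute with one another. Write $L=\sum_{j=0}^n \ell_j x^j$ and $R=\sum_{i=0}^m r_i x^i$, so that
\begin{equation*}
P(q)=\sum_{k=0}^{m+n}\Bigl(\sum_{j+i=k}\ell_j r_i\Bigr)q^k
    =\sum_{j=0}^n \ell_j\sum_{i=0}^m r_i\,q^{j+i}.
\end{equation*}
Using $q^{j+i}=q^iq^j$ (powers of $q$ mutually commute), the inner sum factors as $\bigl(\sum_i r_iq^i\bigr)q^j=R(q)\,q^j=h\,q^j$, giving the clean formula
\begin{equation*}
P(q)=\sum_{j=0}^n \ell_j\,h\,q^j.
\end{equation*}

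From this master formula all three assertions fall out quickly. Part (i) is immediate: if $h=0$ every term vanishes, so $P(q)=0$. For part (ii), since $h\neq 0$ and $\tilde q = hqh^{-1}$, conjugation by $h$ is an automorphism of $\HH$, so $q^j = h^{-1}\tilde q^{\,j}h$, and substituting yields
\begin{equation*}
P(q)=\sum_{j=0}^n \ell_j\,h\,h^{-1}\tilde q^{\,j}h
    =\Bigl(\sum_{j=0}^n \ell_j\,\tilde q^{\,j}\Bigr)h
    =L(\tilde q)\,R(q),
\end{equation*}
which is \eqref{evalProduct}. The ``In particular'' clause is then an immediate consequence: if $P(q)=0$ and $R(q)=h\neq 0$, multiplying $L(\tilde q)h=0$ on the right by $h^{-1}$ gives $L(\tilde q)=0$.

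For part (iii), when $\ell_j\in\R$ for every $j$, each $\ell_j$ commutes with every quaternion, so the master formula rearranges as
\begin{equation*}
P(q)=\sum_{j=0}^n \ell_j\,h\,q^j=h\sum_{j=0}^n \ell_j\,q^j=R(q)\,L(q),
\end{equation*}
which is \eqref{evalProductReal}; the degenerate case $h=0$ is covered trivially by (i). The only conceptual obstacle is the non-commutativity that forbids the naive factorisation $P(q)=L(q)R(q)$, and it is precisely neutralised by the two special commutations available here: powers of the same quaternion commute (enabling the factor $R(q)$ to be pulled out), and real scalars commute with everything (giving the special form in (iii)). The similarity $\tilde q=hqh^{-1}$ in (ii) is exactly what compensates, via the automorphism property, for the order in which $h$ and the remaining $q^j$ appear.
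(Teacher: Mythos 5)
Your proof is correct, and it is essentially the standard argument the paper invokes rather than reproduces: the paper refers the reader to Lam's book for (i)--(ii), where exactly this evaluation computation (pulling $R(q)=h$ out of the Cauchy product and conjugating by $h$ to get $L(\tilde q)R(q)$) is carried out, and it notes that (iii) follows from the product definition plus the fact that reals commute with quaternions, which is precisely your final rearrangement. So the proposal matches the intended proof; no gaps.
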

The following result, first proved  by Gordon and Motzkin  \cite{GordonMotzkin1965}, can also be seen in  \cite{Lam1991}.
\begin{theorem}[Factor Theorem]\label{FactorTheorem}
Let $P \in \HH[x]$  and $q \in \HH$. Then, $q$ is a zero of $P$ if and only if there exists $Q \in \HH[x]$ such that
\begin{equation*}
P(x)=Q(x)\ast (x-q).
\end{equation*}
\end{theorem}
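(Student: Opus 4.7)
The plan is to prove the two directions separately, with the backward implication being essentially free and the forward implication reducing to a right-division algorithm in $\HH[x]$.

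For the backward direction, assume $P(x)=Q(x)\ast(x-q)$. Evaluating the right factor at $q$ gives $(x-q)|_{x=q}=q-q=0$, so Theorem~\ref{teonotrootR}(i) (applied with $L=Q$ and $R=x-q$) immediately yields $P(q)=0$. Note that we cannot simply substitute into the product, because evaluation is not a ring homomorphism; Theorem~\ref{teonotrootR}(i) is exactly the tool designed to handle this situation.

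For the forward direction, the key step is to establish right division by the monic degree-one polynomial $x-q$: for every $P\in\HH[x]$ there exist $Q\in\HH[x]$ and $r\in\HH$ with $P(x)=Q(x)\ast(x-q)+r$. I would prove this by induction on $n=\deg P$. The base case $n=0$ is trivial (take $Q=0$, $r=P$). For the induction step, write $P(x)=a_n x^n+\tilde P(x)$ with $\deg\tilde P<n$, and consider $P(x)-a_n x^{n-1}\ast(x-q)$. Because the leading coefficient of $x-q$ is the real scalar $1$, which commutes with $a_n$, the term $a_n x^n$ cancels cleanly, leaving a polynomial of degree strictly less than $n$, to which the induction hypothesis applies.

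Once right division is in hand, write $P(x)=Q(x)\ast(x-q)+r$. Evaluating at $q$, Theorem~\ref{teonotrootR}(i) gives $(Q\ast(x-q))(q)=0$, so $P(q)=r$. The hypothesis $P(q)=0$ therefore forces $r=0$ and hence $P(x)=Q(x)\ast(x-q)$, as required. The main (and really only) obstacle is ensuring that the Euclidean-style division argument survives non-commutativity; this is exactly why the centrality of the scalar $1$ as the leading coefficient of $x-q$ is essential, and why the same argument would need adjustment if one tried to divide on the right by a non-monic factor.
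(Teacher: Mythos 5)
Your proof is correct. Note that the paper does not actually prove this theorem: it is quoted from the literature, with references to Gordon--Motzkin and to Lam's book, where the standard argument is precisely the one you give. Both directions of your proposal are sound: the backward implication is correctly handled by Theorem~\ref{teonotrootR}(i) rather than by naive substitution into the $\ast$-product, and your right-division-with-remainder lemma goes through because the subtraction $P(x)-a_nx^{n-1}\ast(x-q)$ does cancel the degree-$n$ term (the leading coefficient of $x-q$ being the central scalar $1$), so induction yields $P(x)=Q(x)\ast(x-q)+r$ with $r\in\HH$, and evaluating at $q$ (evaluation being additive, and the first summand vanishing at $q$ by Theorem~\ref{teonotrootR}(i)) gives $r=P(q)$. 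The only cosmetic difference from the textbook treatment is that Lam obtains the same remainder identity in one step from the telescoping factorization $x^{k}-q^{k}=\bigl(x^{k-1}+x^{k-2}q+\cdots+q^{k-1}\bigr)\ast(x-q)$, summing over the monomials of $P$; your induction is an equivalent repackaging of that computation.
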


In 1941, Niven \cite{Niven1941} proved the Fundamental Theorem of Algebra for unilateral quaternionic polynomials, establishing that any non-constant polynomial in $\HH[x]$ always has a zero in $\HH$. 
More general results are contained in the following theorem.
\begin{theorem}\label{theoremResultsZeros}
Let $P$ be a  monic polynomial of degree $n\, (n\ge 1)$ in $\HH[x]$. 
Then:
\begin{enumerate} 
\item[(i)]
$P$ admits a factorization into linear factors, i.e. there exist $x_1,\ldots, x_n\in \HH$, such that
\begin{equation*} 
P(x)=(x-x_n)\ast(x-x_{n-1})\ast \cdots\ast  (x-x_1).
\end{equation*}
\item[(ii)]
For the factor terms $x_i$ referred in (i), we have:
\begin{enumerate}
\item[(a)]
${\mathbf{Z}}_P\subseteq \displaystyle{\bigcup_{i=1}^n [x_i]}$.
\item[(b)] Each of the congruence classes $[x_i]; i=1,\ldots,n,$ contains (at least) a zero of $P$.
\end{enumerate}
\item[(iii)]
If 
\begin{equation*}
P(x)= (x-y_n)  \ast(x-y_{n-1})\ast\cdots  \ast (x-y_1)
\end{equation*}
is another factorization of $P$ into linear factors,  then there exists a permutation $\pi$ of $(1,2,\ldots,n)$ and  
$h_i\in \HH; i=1,\dots,n$, such that
\begin{equation*}
y_{\pi(i)}=h_i x_i h_i^{-1}.
\end{equation*}
\end{enumerate}
\end{theorem}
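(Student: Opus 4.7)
I would prove the three parts in sequence, with (ii)(b) the main obstacle. For (i), induct on $n$: the base $n=1$ is trivial, and for the inductive step Niven's Fundamental Theorem of Algebra (mentioned just above the statement) produces a zero $x_1 \in \HH$ of $P$, the Factor Theorem provides a monic $Q$ of degree $n-1$ with $P = Q \ast (x - x_1)$, and the inductive hypothesis factors $Q$. For (ii)(a), also by induction on $n$, split $P = L \ast R$ with $R = x - x_1$: for $q \in \mathbf{Z}_P$, either $R(q) = 0$ (so $q = x_1 \in [x_1]$), or $R(q) \neq 0$ and Theorem~\ref{teonotrootR}(ii) yields $L(\tilde q) = 0$ with $\tilde q = h q h^{-1}$, $h = R(q)$; the inductive hypothesis places $\tilde q$ in some $[x_j]$ with $j \geq 2$, and $q \sim \tilde q$ together with \eqref{condCongruenceClass} gives $q \in [x_j]$.

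The main obstacle is (ii)(b): $x_1$ is automatically a zero of $P$ by the Factor Theorem, but for $i \geq 2$ a zero in $[x_i]$ must be produced indirectly. My plan uses the coefficient-conjugate polynomial $\overline P$, which satisfies $\overline{A \ast B} = \overline B \ast \overline A$, together with the real characteristic polynomial
\begin{equation*}
\chi_P(x) := \prod_{k=1}^{n} \psi_k(x), \qquad \psi_k(x) := x^2 - 2\re(x_k)\,x + |x_k|^2 \in \R[x],
\end{equation*}
whose $k$-th factor has zero set exactly $[x_k]$ in $\HH$. A telescoping computation that pairs $(x - x_k) \ast (x - \bar x_k) = \psi_k$ from the innermost pair outward, exploiting centrality of each $\psi_k$ in $\HH[x]$, yields the key identity $\chi_P = P \ast \overline P$. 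Suppose for contradiction that $[x_i] \cap \mathbf{Z}_P = \emptyset$ for some $i$ with $x_i \notin \R$. For any $q \in [x_i]$ one has $\chi_P(q) = 0$; Theorem~\ref{teonotrootR}(ii) applied to $\chi_P = P \ast \overline P$ forces $\overline P(q) = 0$, since otherwise $\chi_P(q) = P(\tilde q)\,\overline P(q)$ with $\tilde q \in [q] = [x_i]$ would exhibit a zero of $P$ in $[x_i]$. Hence the infinite class $[x_i]$ is contained in $\mathbf{Z}_{\overline P}$; taking two distinct zeros $q_1, q_2 \in [x_i]$ of $\overline P$ and noting that the right-LCM of $(x - q_1)$ and $(x - q_2)$ in the Euclidean ring $\HH[x]$ equals $\psi_i$, we obtain that $\psi_i$ divides $\overline P$ on the right. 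Conjugating coefficients and using centrality of $\psi_i$ transfers this to $\psi_i \mid P$ on the right, forcing $[x_i] \subseteq \mathbf{Z}_P$ --- a contradiction. The remaining case $x_i \in \R$ is handled directly: a real quaternion commutes with every coefficient, so evaluating $P(x_i)$ across the factorization via Theorem~\ref{teonotrootR}(i)--(ii) yields $P(x_i) = 0$.

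For (iii), I would induct on $n$. By (ii)(a), $y_1 \in \mathbf{Z}_P$ lies in $[x_k]$ for some $k$. A swap lemma --- namely $(x - \alpha) \ast (x - \beta) = (x - \beta') \ast (x - \alpha')$ with $\alpha' \in [\alpha]$ and $\beta' \in [\beta]$, established by using (ii)(b) in degree two to produce a zero of $z^2 - (\alpha+\beta) z + \alpha\beta$ in $[\alpha]$ and then recovering the complementary factor through the coefficient identities --- is applied iteratively to the first factorization to bring a representative $(x - x'_k)$ with $x'_k \in [x_k]$ to the rightmost position while preserving the multiset of congruence classes of the remaining factors. If $x'_k = y_1$, right-cancellation in the domain $\HH[x]$ reduces to a degree-$(n-1)$ polynomial with two linear factorizations and the inductive hypothesis applies. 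Otherwise $y_1$ and $x'_k$ are two distinct zeros of $P$ in $[x_k]$, so $\psi_k$ divides $P$ on the right (by the right-LCM argument above), and writing $\psi_k = (x - \bar y_1) \ast (x - y_1)$ allows one to replace $x'_k$ by $y_1$ in the rearranged factorization before invoking the inductive hypothesis.
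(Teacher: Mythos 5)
Your proposal is correct in outline, but it does substantially more than the paper, which only remarks that (i) is immediate from Niven's Fundamental Theorem of Algebra plus the Factor Theorem (exactly your induction) and refers the reader to \cite{Lam1991} and \cite{SerodioSiu2001} for (ii) and (iii). Your (ii)(a) is the standard argument via Theorem~\ref{teonotrootR}(ii). For (ii)(b), the classical route behind the paper's Lemma~1 exhibits the zero of $[x_i]$ explicitly as $\overline{R_i}(x_i)\,x_i\,\bigl(\overline{R_i}(x_i)\bigr)^{-1}$, which needs $\overline{R_i}(x_i)\neq 0$ (guaranteed when the classes are distinct); your route through the identity $P\ast\overline{P}=\prod_k\mathcal{Q}_{x_k}$, the alternative that either $[x_i]$ meets $\mathbf{Z}_P$ or $[x_i]\subseteq\mathbf{Z}_{\overline{P}}$, and the fact that two distinct congruent zeros force divisibility by the characteristic polynomial, is a valid self-contained argument that covers repeated classes with no extra care; you could even replace the right-LCM machinery of the Euclidean ring $\HH[x]$ by the paper's own Theorem~\ref{teorSphericalRoots}(iii), staying inside the paper's toolkit. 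Your swap lemma is the paper's Lemma~\ref{lemmaPermute2Factors} (proved there by direct computation; your derivation from the degree-two case of (ii)(b) plus coefficient comparison also works and is not circular, since (ii)(b) is established before (iii)). The one step to tighten is Case~2 of (iii): the phrase about replacing $x'_k$ by $y_1$ needs justification. From $\mathcal{Q}_{x_k}=(x-\bar y_1)\ast(x-y_1)=(x-\bar x'_k)\ast(x-x'_k)$ and $P=S\ast\mathcal{Q}_{x_k}$, right cancellation (degrees add, so $\HH[x]$ has no zero divisors) gives $G=S\ast(x-\bar x'_k)$ and $(x-y_n)\ast\cdots\ast(x-y_2)=S\ast(x-\bar y_1)$; fixing one linear factorization of $S$ (part (i)) and applying the induction hypothesis twice, to $G$ and then to $S\ast(x-\bar y_1)$, yields the equality of the multisets of congruence classes and hence the permutation $\pi$ and the conjugating elements $h_i$. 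With that spelled out, your argument is complete.
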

The first result in the above theorem is an immediate consequence of the Fundamental Theorem of Algebra for quaternion polynomials and of the Factor Theorem; the proof of the other results can be found in \cite{Lam1991} and \cite{SerodioSiu2001}.

Given a polynomial $P(x)=\sum_{k=0}^n a_k x^k$,  its {\textit{conjugate polynomial}}, denoted by $
\overline{P}(x)$, is given by
\begin{equation*}
\overline{P}(x)=\sum_{k=0}^n {\overline{a_k}} x^k.
\end{equation*}

It is very simple to verify that, for all $ P,Q\in \HH[x]$:
\begin{align*}
\overline{P\ast Q}&= \overline{Q}\ast \overline{P}, \\
\label{PPbar}
P\ast \overline{P} &\in \mathbb{R}[x]  {\text{\ \  and\ \  }}  P \ast \overline{P}=\overline{P} \ast P.
\end{align*}

To each quaternion $q$, we will 
associate the following polynomial
\begin{equation*}%\label{CP}
{\cal Q}_q(x) :=
(x-q)\ast(x-\overline{q}) =x^2 - 2 \re q \, x + |q|^2,
\end{equation*}
called the {\textit{characteristic polynomial}} of $q$.
Since the characteristic polynomial of $q$ only depends on the real part and norm of $q$ and
recalling  \eqref{condCongruenceClass}, we immediately conclude that
$
{\cal Q}_q={\cal Q}_{q'}$ if and only if $ [q]=[q']$.
Note that ${\cal Q}_q$ is a quadratic polynomial with real coefficients.
It can also be shown that the zero-set of ${\cal Q}_q$ is the congruence class of $q$, i.e. 
$\mathbf{Z}_{{\cal Q}_q} = [q]$; see, e.g. \cite{Zhang1997}.
This result already shows that,  in what concerns the number of zeros,  polynomials in $\HH[x]$ can behave very differently from complex polynomials: a polynomial in $\HH[x]$ can have an infinite number of zeros. However, as Theorem~\ref{theoremResultsZeros} shows, the zeros of a polynomial of degree $n$  belong to, at most,  $n$ congruence classes in $\HH$.
 
The zeros of an unilateral  quaternionic polynomial can be of two distinct types, the so-called {\textit{isolated zeros}} and {\textit{spherical zeros}}, whose definitions  we now recall.
Let $q$ be a zero of a given polynomial $P$. We say that $q$ is an {\textit{isolated zero}} of $P$ if the congruence class of $q$ contains no other  zero of $P$.
If $q$ is not an isolated zero of $P$, we call it a {\textit{spherical zero}} of $P$.
Note that, according to the definition, real zeros are always isolated zeros. 
The next theorem gives conditions under which a non-real zero is a spherical zero (see e.g.~\cite{PogoruiShapiro2004}).
\begin{theorem}\label{teorSphericalRoots}
Let $q$ be a non-real zero of  a given polynomial $P\in \HH[x]$. Then, $q$ is a spherical zero of $P$ if and only if any of the following 
equivalent conditions hold:
\begin{enumerate}
\item[(i)]
$q$ and $\overline{q}$ are both zeros of $P$.
\item[(ii)]
$[q] \subseteq {\mathbf{Z}}_P$.
\item[(iii)]
The characteristic polynomial of $q$, ${\cal  Q}_q$, is a divisor of $P$, i.e. there exists a polynomial $Q \in \HH[x]$ such that
$
P=Q \ast {\cal  Q}_q.
$ 
\end{enumerate}
\end{theorem}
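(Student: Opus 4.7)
The plan is to establish the cycle (iii) $\Rightarrow$ (ii) $\Rightarrow$ (i) $\Rightarrow$ (spherical) $\Rightarrow$ (iii), from which all four properties are equivalent at once.

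First I would dispose of the three easy implications. For (iii) $\Rightarrow$ (ii), a factorization $P = Q \ast {\cal Q}_q$ combined with part (i) of Theorem~\ref{teonotrootR} (applied with right factor ${\cal Q}_q$) shows that any $p \in [q] = \mathbf{Z}_{{\cal Q}_q}$ is a zero of $P$. For (ii) $\Rightarrow$ (i), simply note that $\overline{q} \in [q]$ by \eqref{condCongruenceClass}. For (i) $\Rightarrow$ (spherical), the non-reality of $q$ forces $\overline{q} \ne q$, so $[q]$ already contains the two distinct zeros $q$ and $\overline{q}$.

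The substantive step is (spherical) $\Rightarrow$ (iii). Pick a second zero $q' \in [q]$ with $q' \ne q$ and apply the Factor Theorem to write $P = Q_1 \ast (x - q)$. Since $q'-q \ne 0$, part (ii) of Theorem~\ref{teonotrootR} gives
\[
0 = P(q') = Q_1(\tilde{q}')\,(q'-q), \qquad \tilde{q}' := (q'-q)\,q'\,(q'-q)^{-1},
\]
so $Q_1(\tilde{q}') = 0$. The heart of the argument is the identity $\tilde{q}' = \overline{q}$. To prove it, exploit the fact that $q'$, lying in $[q]$, is a zero of the real polynomial ${\cal Q}_q$, hence $(q')^2 = 2(\re q)\,q' - |q|^2$. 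Substituting this into $(q'-q)q' = (q')^2 - q q'$ and using $\overline{q} = 2\re q - q$ together with $\overline{q} q = |q|^2$, one obtains $(q'-q)q' = \overline{q}(q'-q)$, which yields $\tilde{q}' = \overline{q}$. Once $Q_1(\overline{q}) = 0$ is in hand, the Factor Theorem applied to $Q_1$ gives $Q_1 = Q_2 \ast (x - \overline{q})$, and therefore
\[
P = Q_2 \ast (x - \overline{q}) \ast (x - q) = Q_2 \ast {\cal Q}_q,
\]
which is (iii).

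The only real obstacle is the identification $\tilde{q}' = \overline{q}$; as sketched, this collapses to two lines of quaternionic algebra driven by the observation that every $q' \in [q]$ satisfies the real quadratic equation ${\cal Q}_q(q')=0$. Everything else in the argument is a direct appeal to the Factor Theorem and to Theorem~\ref{teonotrootR}.
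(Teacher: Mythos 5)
Your argument is correct and complete. Note that the paper itself offers no proof of this theorem --- it is quoted from the literature (Pogorui and Shapiro) --- so there is nothing internal to compare against; what matters is that your proof is self-contained and uses only tools already stated in the paper, which it does. The three ``easy'' implications are handled exactly as they should be: (iii) $\Rightarrow$ (ii) via Theorem~\ref{teonotrootR}(i) together with $\mathbf{Z}_{{\cal Q}_q}=[q]$, (ii) $\Rightarrow$ (i) via $\overline{q}\in[q]$ from \eqref{condCongruenceClass}, and (i) $\Rightarrow$ (spherical) because $q\neq\overline{q}$ for non-real $q$, so $[q]$ contains a second zero and $q$ is not isolated. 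The substantive step is also sound: writing $P=Q_1\ast(x-q)$ by Theorem~\ref{FactorTheorem}, evaluating at a second zero $q'\in[q]\setminus\{q\}$ with Theorem~\ref{teonotrootR}(ii) gives $Q_1(\tilde{q}')=0$ with $\tilde{q}'=(q'-q)\,q'\,(q'-q)^{-1}$, and your identification $\tilde{q}'=\overline{q}$ checks out: since $q'\in[q]$ satisfies ${\cal Q}_q(q')=0$, one has
\begin{equation*}
(q'-q)\,q' \;=\; (q')^2-qq' \;=\; 2\re q\, q'-|q|^2-qq' \;=\; (2\re q-q)\,q'-\overline{q}q \;=\; \overline{q}\,(q'-q),
\end{equation*}
so conjugation by $q'-q$ sends $q'$ to $\overline{q}$. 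A second application of the Factor Theorem then yields $P=Q_2\ast(x-\overline{q})\ast(x-q)=Q_2\ast{\cal Q}_q$, where the last equality is legitimate because $(x-\overline{q})\ast(x-q)=x^2-2\re q\,x+|q|^2={\cal Q}_q$ in $\HH[x]$ just as for the opposite ordering. This is essentially the classical argument one finds in the cited sources, so no gap remains.
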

Recalling that the congruence classes of non-real quaternions can be identified with spheres, condition (ii) justifies the choice of the term {\textit{spherical}} to designate this type of zeros. When $q$ is a spherical zero, we also say that $q$  {\textit{generates  the sphere of zeros}} $[q]$.
 
\section{The Weierstrass method in $\mathbb{H}[x]$}

Let $P$ be a complex monic polynomial of degree $n$ with roots $\zeta_1, \ldots,\zeta_n$  and let $z_1^{(0)}, \ldots, z_{n}^{(0)} $ 
 be $n$ given distinct numbers.
The  (classical) {\textit{Weierstrass method}} for approximating the roots $\zeta_i$ is defined by the iterative scheme:
\begin{equation}\label{classicalWParallel}
z_i^{(k+1)}=z_i^{(k)}-\frac{P(z_i^{(k)})}{\displaystyle{\prod_{\genfrac{}{}{0pt}{}{j=1}{j\ne i}}^n} (z_i^{(k)}-z_j^{(k)})}; i=1, \ldots, n; \ k=0,1,2,\ldots
\end{equation}
If the roots $\zeta_1, \ldots,\zeta_n$  are distinct and $z_1^{(0)}, \ldots, z_{n}^{(0)} $  are sufficiently good initial approximations
to these roots, then the method converges at a quadratic rate, as was firstly proven by Dochev \cite{Dochev1962} (see also \cite{Aberth1973,Werner1982}). For multiple roots, the method still converges (locally) but the quadratic convergence is lost; see e.g. \cite{Fraigniaud1991}.

Formula \eqref{classicalWParallel} is realized in {\textit{parallel mode}} and is often called the {\textit{total-step} mode}.
The convergence of the method can be accelerated by using a different variant that makes use of  the  most recent updated approximations to the roots as soon as they are available, as follows:
\begin{equation*}%\label{classicalWSerial}
z_i^{(k+1)}=z_i^{(k)}-
\frac{P(z_i^{(k)})}
{\displaystyle{\prod_{j=1}^{i-1} (z_i^{(k)}-z_j^{(k+1)})}
\displaystyle{\prod_{j=i+1}^n (z_i^{(k)}-z_j^{(k)})
}}; i=1, \ldots, n; \ k=0,1,2,\ldots
\end{equation*}
The above variant of the Weierstrass method is usually referred to as the {\textit{serial}}, {\textit{sequential}} or {\textit{single-step}} mode (see \cite{PetkovicTrajkovic1994} and references therein).
  
\subsection{A quaternionic Weierstrass-like scheme}
Our purpose is to adapt the idea of the  Weierstrass method to the computation of the zeros of quaternionic  polynomials.
So, let $P$ be a given monic polynomial of degree $n$ in $\HH[x].$
Corresponding to the assumption imposed in the complex case to guarantee the quadratic convergence of the  method -- i.e. that the zeros of the polynomial are simple --  we  will now assume that the polynomial $P$ has $n$ distinct isolated roots.
By analogy with the complex case, in this situation, we will still  say that $P$ has only {\textit{simple roots.}}
As stated in the previous section, $P$ can be factorized in the form
\begin{equation}\label{fatPolP}
P(x)=(x-x_n) \ast (x-x_{n-1}) \ast \dots \ast (x-x_1),
\end{equation}
with the factor terms $x_i \in \HH.$ 
For simplicity, we introduce the following convenient notation, which we borrow and adapt from \cite{GentiliStoppatoStruppa2013},
\begin{equation*} %\label{notationProdStar}
\prodstar_{i=k}^m (x-\alpha_i):=(x-\alpha_m)\ast (x-\alpha_{m-1})\ast \cdots  \ast(x-\alpha_k).
\end{equation*}
\begin{remark}
Note that the order of the factors, due to the non-commutativity of the product in $\HH[x]$,  is important. We also adopt the convention that
\begin{equation*}
\prodstar_{i=k}^m (x-\alpha_i):=1,  \quad{\text{whenever\ }}\ k>m.
\end{equation*}
\end{remark}

We first present a simple lemma, relating the roots of $P$ with the quaternions involved in any of its factorizations.
\begin{lemma}
Let $P $ be a (monic) polynomial of degree $n$ in  $ \HH[x]$ with simple roots and let \eqref{fatPolP} be one of its factorizations. Then:

\begin{enumerate}
\item[(i)] The congruence classes of the elements $x_j;j=1,\dots,n,$ in \eqref{fatPolP} are distinct.

\item[(ii)]
 The roots $\zeta_1, \ldots, \zeta_n$ of $P$ can be obtained from the quaternions $x_1, \ldots, x_n$ as follows:
\begin{equation}\label{formulaRoots}
\zeta_i=\overline{R_i}(x_i)\, x_i\, \bigl(\overline{R_i}(x_i)\bigr)^{-1}; \ i=1,2, \ldots, n,
\end{equation}
where $R_i$ are the polynomials given by
\begin{equation}\label{polRi}
 R_i:=\prodstar_{j=1}^{i-1} (x-x_j).
\end{equation}
\end{enumerate}
\end{lemma}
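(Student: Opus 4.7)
The plan is to dispatch (i) by a counting argument based on Theorem \ref{theoremResultsZeros}, and then to prove (ii) in two stages: first showing that $\zeta_i$ must be a zero of the right factor $(x-x_i)\ast R_i$ of $P$, which yields an implicit conjugation formula; and second, trading the implicit conjugator $R_i(\zeta_i)$ for the explicit $\overline{R_i}(x_i)$ via Theorem \ref{teonotrootR}(ii) applied to the real product $\overline{R_i}\ast R_i$.

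For (i), suppose for contradiction that $[x_i]=[x_j]$ for some $i\ne j$; then the list $[x_1],\dots,[x_n]$ contains at most $n-1$ distinct classes. By Theorem \ref{theoremResultsZeros}(ii)(a), the $n$ distinct zeros of $P$ all lie in the union $\bigcup_{k=1}^n[x_k]$, and because these zeros are isolated each congruence class accounts for at most one of them. This forces at most $n-1$ distinct zeros in total, contradicting the simple-roots hypothesis. With (i) in hand, Theorem \ref{theoremResultsZeros}(ii)(b) shows that each $[x_i]$ contains exactly one zero of $P$, and we label the zeros so that $\zeta_i$ is this zero.

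For (ii), the case $i=1$ is immediate since $R_1=1$ and $(x-x_1)$ is the right factor of $P$, so $\zeta_1=x_1$. For $i\ge 2$, let $S_i:=(x-x_i)\ast R_i$ and $L_i:=\prodstar_{j=i+1}^n(x-x_j)$, so that $P=L_i\ast S_i$. If $S_i(\zeta_i)\ne 0$, Theorem \ref{teonotrootR}(ii) applied to $P=L_i\ast S_i$ would produce a zero of $L_i$ conjugate to $\zeta_i$ and hence in $[x_i]$; but the zeros of $L_i$ lie in $[x_{i+1}]\cup\cdots\cup[x_n]$, which is disjoint from $[x_i]$ by (i). Hence $S_i(\zeta_i)=0$. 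Expanding via the definition of $\ast$ gives $S_i(q)=R_i(q)\,q-x_iR_i(q)$, so $R_i(\zeta_i)\,\zeta_i=x_i\,R_i(\zeta_i)$; and $R_i(\zeta_i)\ne 0$ because the zeros of $R_i$ are confined to $[x_1]\cup\cdots\cup[x_{i-1}]$. Therefore $x_i=R_i(\zeta_i)\,\zeta_i\,R_i(\zeta_i)^{-1}$.

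To pass to the explicit formula, apply Theorem \ref{teonotrootR}(ii) to $\overline{R_i}\ast R_i$ at $\zeta_i$ with conjugator $h:=R_i(\zeta_i)$: since $h\zeta_ih^{-1}=x_i$ by the previous step, one obtains $(\overline{R_i}\ast R_i)(\zeta_i)=\overline{R_i}(x_i)\,R_i(\zeta_i)$. Because $\overline{R_i}\ast R_i\in\mathbb{R}[x]$, its value at $\zeta_i$ is an $\mathbb{R}$-linear combination of powers of $\zeta_i$ and therefore commutes with $\zeta_i$. Combining this commutation with the substitution $R_i(\zeta_i)\,\zeta_i=x_i\,R_i(\zeta_i)$ gives $\overline{R_i}(x_i)\,x_i\,R_i(\zeta_i)=\zeta_i\,\overline{R_i}(x_i)\,R_i(\zeta_i)$, and cancelling the nonzero $R_i(\zeta_i)$ on the right leaves $\overline{R_i}(x_i)\,x_i=\zeta_i\,\overline{R_i}(x_i)$. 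Finally, $\overline{R_i}(x_i)\ne 0$, because otherwise $(\overline{R_i}\ast R_i)(\zeta_i)$ would also vanish; but $\zeta_i$ lies outside the zero-set of the real polynomial $\overline{R_i}\ast R_i=R_i\ast\overline{R_i}$, whose zeros sit in $[x_1]\cup\cdots\cup[x_{i-1}]$ (using that $[\,\overline{x_k}\,]=[x_k]$). I anticipate that this final manoeuvre---recognising that Theorem \ref{teonotrootR}(ii) applied to the real product $\overline{R_i}\ast R_i$ is precisely what converts the implicit $R_i(\zeta_i)$ into the explicit $\overline{R_i}(x_i)$---is the main subtlety of the argument.
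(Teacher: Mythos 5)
Your proof is correct and is essentially the argument the paper intends but does not write out: part (i) is exactly the counting argument the paper calls an ``immediate consequence'' of Theorem~\ref{theoremResultsZeros} together with isolatedness of the $n$ zeros, and part (ii) is a fully detailed version of the ``adaptation of the proof of Proposition~16.3 in \cite{Lam1991}'' that the paper merely cites. In particular, your two uses of Theorem~\ref{teonotrootR}(ii) --- first on $P=L_i\ast\bigl((x-x_i)\ast R_i\bigr)$ to get $R_i(\zeta_i)\,\zeta_i=x_i\,R_i(\zeta_i)$, and then on the real polynomial $\overline{R_i}\ast R_i$ to trade the conjugator $R_i(\zeta_i)$ for $\overline{R_i}(x_i)$ --- are precisely the mechanism behind that adaptation, so no genuine gap remains.
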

\begin{proof}
The fact that the congruence classes $[x_j]; j=1, \ldots, n$, are distinct  is an immediate consequence of the results in  Theorem~\ref{theoremResultsZeros} and of the assumption that $P$ has only simple roots, i.e.~it has exactly $n$ isolated roots.
The proof that the roots of $P$ are given by  \eqref{formulaRoots} is a simple adaptation of the proof of \cite[Proposition~16.3]{Lam1991}.
\end{proof}

%Following the idea of the  Weierstrass method in its sequential version, we will now show how to obtain sequences  $\{z_1^{(k)}\},\{z_2^{(k)}\},\dots,\{z_n^{(k)}\}$ converging, at a quadratic rate, respectively to $x_1,x_2,\dots, x_n$ in the factorization \eqref{fatPolP} of a given polynomial $P$.
%Then, we will show  how these sequences can be used to estimate the zeros of $P$. 
Following the idea of the  Weierstrass method in its sequential version, we will now show how to obtain sequences converging, at a quadratic rate, to the factor terms in~\eqref{fatPolP} of a given polynomial $P$.
Then, we will show  how these sequences can be used to estimate the zeros of $P$. 
\begin{theorem}\label{TeorEsquema}
Let $P$ be a  polynomial of degree $n$ in $\HH[x]$ with simple roots and, for $ i=1,\ldots, n;$ $ k=0,1,2,\ldots$, let 
\begin{equation} \label{scheme}
z_i^{(k+1)} =  z_i^{(k)} - \left( \overline{ {\cal L}_i^{(k)} } \ast P \ast \overline{ {\cal R}_i^{(k)}} \right)( z_i^{(k)} ) \,\left({\boldsymbol{\cal Q}}^{(k)}_i(z_i^{(k)}) \right)^{-1},
\end{equation}
where
\begin{align}
{\cal L}_i^{(k)}(x):=& \prodstar_{j=i+1}^{n}  \big(x-z_j^{(k)}\big), \label{polCalL} \\
{\cal R}_i^{(k)}(x):=& \ \prodstar_{j=1}^{i-1} \big(x-{z}_j^{(k+1)}\big)\label{polCalR}\\
\intertext{and}
\label{calQ}
{\boldsymbol{\cal Q}}^{(k)}_i(x):=&
\prod_{j=1}^{i-1} {\cal Q}_{z_j^{(k+1)}}(x)\prodstar  \prod_{j=i+1}^{n}  {\cal Q}_{z_j^{(k)}}(x),
\end{align}
with ${\cal Q}_q$ denoting the characteristic polynomial of $q$.
If the initial approximations $z_i^{(0)}$ are sufficiently close to the factor terms $x_i$ in a factorization of $P$ in the form \eqref{fatPolP}, then the sequences $\{z_i^{(k)}\}$ 
converge quadratically to $x_i$.
\end{theorem}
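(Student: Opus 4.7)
The plan is to follow the standard blueprint for convergence proofs of Weierstrass-type iterations. First, I would establish a self-consistency identity showing that if the companion approximations $z_j^{(k+1)}$ ($j<i$) and $z_j^{(k)}$ ($j>i$) coincide with $x_j$, then $z_i^{(k+1)} = x_i$ exactly. Writing $P = L_i \ast (x-x_i)\ast R_i$ with $L_i := \prodstar_{j=i+1}^n(x-x_j)$ and $R_i := \prodstar_{j=1}^{i-1}(x-x_j)$, repeated use of $(x-q)\ast(x-\overline{q}) = \mathcal{Q}_q$ combined with the centrality of real polynomials in $\HH[x]$, applied by telescoping inwards, yields the two reality identities
$$\overline{\mathcal{L}_i^{(k)}}\ast\mathcal{L}_i^{(k)} = \prod_{j=i+1}^n \mathcal{Q}_{z_j^{(k)}}, \qquad \mathcal{R}_i^{(k)}\ast\overline{\mathcal{R}_i^{(k)}} = \prod_{j=1}^{i-1}\mathcal{Q}_{z_j^{(k+1)}}.$$
In the ideal configuration $\mathcal{L}_i^{(k)} = L_i$, $\mathcal{R}_i^{(k)} = R_i$, the same telescoping shows $\overline{\mathcal{L}_i^{(k)}}\ast P\ast\overline{\mathcal{R}_i^{(k)}} = \boldsymbol{\mathcal{Q}}_i^{(k)}\ast(x-x_i)$, and Theorem~\ref{teonotrootR}(iii) (applicable because $\boldsymbol{\mathcal{Q}}_i^{(k)}$ is real) evaluates the right-hand side at $z_i^{(k)}$ to $(z_i^{(k)}-x_i)\cdot\boldsymbol{\mathcal{Q}}_i^{(k)}(z_i^{(k)})$, so the update returns $x_i$ exactly.

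Next I would linearise around this identity. Setting $\delta L_i := L_i-\mathcal{L}_i^{(k)}$ and $\delta R_i := R_i-\mathcal{R}_i^{(k)}$, whose coefficients are linear in the errors $e_j^{(k)}:=z_j^{(k)}-x_j$ ($j>i$) and $e_j^{(k+1)}:=z_j^{(k+1)}-x_j$ ($j<i$), substitution into $P$ and expansion splits $\overline{\mathcal{L}_i^{(k)}}\ast P\ast\overline{\mathcal{R}_i^{(k)}}$ into the main term $\boldsymbol{\mathcal{Q}}_i^{(k)}\ast(x-x_i)$ plus three error polynomials, $T_2 := A_i^{(k)}\ast(x-x_i)\ast\delta R_i\ast\overline{\mathcal{R}_i^{(k)}}$, $T_3 := B_i^{(k)}\ast\overline{\mathcal{L}_i^{(k)}}\ast\delta L_i\ast(x-x_i)$, and $T_4$ bilinear in $(\delta L_i,\delta R_i)$, where $A_i^{(k)}$ and $B_i^{(k)}$ denote the real polynomials in the reality identities above. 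Evaluating at $z_i^{(k)}$: the main term gives $e_i^{(k)}\cdot\boldsymbol{\mathcal{Q}}_i^{(k)}(z_i^{(k)})$ by Theorem~\ref{teonotrootR}(iii); $T_3$, because $(x-x_i)$ is its rightmost factor, yields via Theorem~\ref{teonotrootR}(ii) a quantity of magnitude $O(|e_i^{(k)}|\cdot\max_{j>i}|e_j^{(k)}|)$; $T_2$ is linear in $\delta R_i$ and yields $O(\max_{j<i}|e_j^{(k+1)}|)$; and $T_4$ yields $O(\max_{j>i}|e_j^{(k)}|\cdot\max_{j<i}|e_j^{(k+1)}|)$. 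The denominator $\boldsymbol{\mathcal{Q}}_i^{(k)}(z_i^{(k)})$ stays bounded away from zero throughout a small neighbourhood of $(x_1,\ldots,x_n)$, because the classes $[x_j]$ are distinct by part~(i) of the preceding lemma, so its inverse is uniformly bounded. Combining, the error recursion is
$$|e_i^{(k+1)}| \le C\bigl(|e_i^{(k)}|\max_{j>i}|e_j^{(k)}| + \max_{j<i}|e_j^{(k+1)}| + \max_{j>i}|e_j^{(k)}|\max_{j<i}|e_j^{(k+1)}|\bigr).$$

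Quadratic convergence then follows by induction on $i$, with $\epsilon^{(k)} := \max_j|e_j^{(k)}|$. The base $i=1$ is clean because the empty-product convention forces $\delta R_1 \equiv 0$, leaving only the $T_3$-contribution and $|e_1^{(k+1)}| \le C(\epsilon^{(k)})^2$. For the inductive step, assuming $|e_j^{(k+1)}| = O((\epsilon^{(k)})^2)$ for all $j<i$, each of the three pieces in the above bound is $O((\epsilon^{(k)})^2)$, so $|e_i^{(k+1)}| \le \tilde C(\epsilon^{(k)})^2$ and hence $\epsilon^{(k+1)} \le \tilde C(\epsilon^{(k)})^2$, giving the claimed quadratic rate provided $\epsilon^{(0)}$ is small enough to keep $\boldsymbol{\mathcal{Q}}_i^{(k)}(z_i^{(k)})^{-1}$ uniformly bounded throughout the iterations. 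The main technical obstacle I anticipate is the term-by-term evaluation in the error analysis: Theorem~\ref{teonotrootR}(ii) introduces quaternionic conjugations $\tilde q = hqh^{-1}$ whose distance from $q$ need \emph{not} shrink with $|h|$, so at every step one must carefully exploit the reality (hence centrality) of $A_i^{(k)}$, $B_i^{(k)}$, and $\boldsymbol{\mathcal{Q}}_i^{(k)}$ to keep the estimates first-order in the $\delta$'s rather than uncontrolled.
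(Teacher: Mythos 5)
Your proposal is correct and follows essentially the same route as the paper's proof: induction on $i$, writing $P=L_i\ast(x-x_i)\ast R_i$ with $L_i={\cal L}_i^{(k)}+\delta L_i$, $R_i={\cal R}_i^{(k)}+\delta R_i$, exploiting the reality (hence centrality) of $\overline{{\cal L}_i^{(k)}}\ast{\cal L}_i^{(k)}$ and ${\cal R}_i^{(k)}\ast\overline{{\cal R}_i^{(k)}}$ so that the main term becomes ${\boldsymbol{\cal Q}}_i^{(k)}\ast(x-x_i)$, handling the error terms via Theorem~\ref{teonotrootR}(ii)--(iii), and bounding $\bigl({\boldsymbol{\cal Q}}_i^{(k)}(z_i^{(k)})\bigr)^{-1}$ using the distinctness of the classes $[x_j]$. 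The only additions (the exact fixed-point check and the explicit error recursion) are harmless refinements of the same argument.
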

\begin{proof}
Let $z_i^{(k)}$ be approximations to $x_i$
 with errors $\varepsilon_i^{(k)}$, i.e. 
\begin{equation}\label{errors}
\varepsilon_i^{(k)}:=x_i-z_i^{(k)}, \ i=1,\dots,n,
\end{equation}  
and let 
\begin{equation*}%\label{def_varepsilon}
\varepsilon^{(k)} := \max_i |\varepsilon_i^{(k)}|.
\end{equation*}
We assume that $\varepsilon^{(k)}$ is {\textit{small enough}}, i.e. that $z_i^{(k)}$ are  {\textit{sufficiently good}} approximations  to $x_i$. We want to show that the next iterates $z_i^{(k+1)}$ are approximations to $x_i$ with errors $\varepsilon_i^{(k+1)}$ such that
\begin{equation*}
\varepsilon_i^{(k+1)}={\cal O}\bigl((\varepsilon^{(k)})^2\bigr).
\end{equation*}
We will do this by induction on $i$.
 For simplicity, we will  omit the iteration superscript $(k)$, writing simply $z_i$ for $z_i^{(k)}$, $\varepsilon_i$ for $\varepsilon_i^{(k)}, {\cal L}_i$ for ${\cal L}_i^{(k)}$ etc. and will replace the superscript $(k+1)$ by  a tilde symbol, using  $\tilde{z_i}$ for $z_i^{(k+1)}$, $\tilde{\varepsilon_i}$ for $\varepsilon_i^{(k+1)}$, etc. 

\noindent 
{\bf Step 1:} We first prove that the result is true for $i=1$, i.e. that we have $\tilde{\varepsilon}_1={\cal O}(\varepsilon^2).$

By making use  of \eqref{errors}, we can rewrite the polynomial
$P(x)$ as
\begin{align*}
P(x)& =\prodstar_{j=1}^n (x-x_j) =
\prodstar_{j=2}^{n} (x-z_j-\varepsilon_j)\ast (x-z_1-\varepsilon_1) \\ 
&=\Bigl(\prodstar_{j=2}^{n} (x-z_j)+\mathscr{E}_1(x)\Big)\ast (x-z_1-\varepsilon_1), %\label{polStep1}
\end{align*}
where  $\mathscr{E}_1(x)$ 
designates  a  remainder polynomial consisting  of a sum of  $n-1$ terms of the form
\begin{equation*}
-(x-z_n)\ast\dots\ast(x-z_{j-1})\ast \varepsilon_j \ast (x-z_{j+1}) \ast \dots \ast(x-z_2),
\end{equation*}
($j= 2,\ldots,n $)  with  terms with  $\ast$-products involving at least two $\varepsilon_j$'s. 
By using the definition \eqref{polCalL} of the polynomial ${\cal L}_1$,
we can write  $P(x)$ in the following form
\begin{align*}
P(x)&=\Bigl({\cal L}_1(x)+ \mathscr{E}_1(x)\Bigr) \ast (x-z_1-\varepsilon_1) \nonumber\\
&= 
{\cal L}_1(x) \ast (x-z_1-\varepsilon_1) + \mathscr{E}_1(x) \ast (x-z_1-\varepsilon_1).
\end{align*}
Let $\overline{{\mathcal L}}_1$ be the conjugate of $ {\cal L}_1$ and
note that 
$\overline{{\cal L}}_1 \ast{\cal L}_1$ is precisely the real polynomial ${\boldsymbol{\cal{Q}}}_1$ defined by \eqref{calQ}.
Hence, if  we multiply $P(x)$ on the left by $\overline{{\cal L}}_1$ and evaluate the resulting polynomial at the point $x=z_1$, we obtain, 
 recalling the results  \eqref{evalProduct} and \eqref{evalProductReal} in 
 Theorem~\ref{teonotrootR}, 
\begin{align*}
\bigl(\overline{{\cal L}}_1\ast P\bigr)(z_1) 
 = -\varepsilon_1 {\boldsymbol{\cal{Q}}}_1(z_1)
  - \Bigl(\overline{{\cal L}}_1\ast  \mathscr{E}_1\Bigr)(\hat{z}_1)\, \varepsilon_1, 
\end{align*}
where $\hat{z}_1=\varepsilon_1 z_1 \varepsilon_1^{-1}$.
Observing that we may assume that we are working in a bounded domain ${\cal D}$ of $\HH$ (a sufficiently large disk containing all $z_i$) and recalling the definition of 
$\mathscr{E}_1$,
it is easily seen that we have
\begin{equation*}
\mathscr{E}_1 (\alpha)={\cal O}(\varepsilon),\ \forall \alpha \in {\cal D} 
\end{equation*}
and therefore 
\begin{equation*}
\bigl(\overline{{\cal L}}_1\ast P\bigr)(z_1) = -\varepsilon_1 {\boldsymbol{\cal{Q}}}_1(z_1)+ {\cal O}(\varepsilon^2).
\end{equation*}
Since we are assuming that the congruence classes $[x_j]$ are distinct, then, for sufficiently small $\varepsilon$, $|{\boldsymbol{\cal{Q}}}_1(z_1)|$ is bounded away from zero and so, by multiplying both sides of the above equality on the right by $\bigl({\boldsymbol{\cal{Q}}}_1(z_1)\bigr)^{-1}$, we obtain
\begin{equation*}
\left(\overline{{\cal L}}_1\ast P\right)(z_1)\, \left({\boldsymbol{\cal{Q}}}_1(z_1)\right)^{-1}=-\varepsilon_1+{\cal O}(\varepsilon^2),
\end{equation*}
or, in other words (cf. \eqref{errors}),
\begin{equation*}
x_1=z_1- \left(\overline{{\cal L}}_1\ast P\right)(z_1)\left({\boldsymbol{\cal{Q}}}_1(z_1)\right)^{-1}+{\cal O}(\varepsilon^2),
\end{equation*}
which means that the next approximation to $x_1$ 
\begin{equation*}
\tilde{z}_1=z_1-(\overline{{\cal L}}_1\ast P)(z_1)\, \bigl({\boldsymbol{\cal{Q}}}_1(z_1)\bigr)^{-1}
\end{equation*}
is such that 
\begin{equation*}%\label{error1}
\tilde{\varepsilon}_1=x_1-\tilde{z}_1={\cal O}(\varepsilon^2).
\end{equation*}
{\bf Step  $\boldsymbol{i}$:} 
We now assume that, for $j=1, \ldots, i-1$, ${\tilde{z}_j}$ approximates  $x_j$ with an error $\tilde{\varepsilon}_j$  such that 
$\tilde{\varepsilon}_j={\cal O}(\varepsilon^2)$ and 
prove that ${\tilde{z}_i}$ is also an ${\cal O}(\varepsilon^2)$ approximation  to $x_i$.

Using the polynomials 
\begin{equation*}
L_i(x)=\prodstar_{j=i+1}^n (x-x_j)
\quad\text{and}\quad
R_i(x)= \prodstar_{j=1}^{i-1}(x-x_j)
\end{equation*}
we can write 
\begin{align}
\label{Li}
L_i(x) &= \prodstar_{j=i+1}^n (x-z_j-\varepsilon_j)
       = \prodstar_{j=i+1}^n (x-z_j)+ \mathscr{E}_i(x)
       = {\cal L}_i(x)+\mathscr{E}_i(x)\\
\intertext{and}
\label{Ri}
R_i(x) &= \prodstar_{j=1}^{i-1} (x-\tilde z_j-\tilde \varepsilon_j)
       = \prodstar_{j=1}^{i-1} (x-\tilde z_j)+ \tilde{\mathscr{E}}_i(x)
       = {\cal R}_i(x)+\tilde{\mathscr{E}}_i(x),
\end{align}
where  $\mathscr{E}_i$ and  $\tilde{ \mathscr{E}}_i$ are remainder polynomials defined in an analogous manner to  $\mathscr{E}_1$, with the obvious modifications. Note that $\mathscr{E}_i$ is a sum of terms, all of which involve at least the product by a $\varepsilon_{j}$ ($j \in \{i+1, \ldots, n\}$) and $\tilde{\mathscr{E}}_i $ a sum of terms, all of which involve at least the product by an $\tilde{\varepsilon}_{j}$ ($j \in \{1, \ldots, i-1\}$).
Therefore 
\begin{equation*}
\mathscr{E}_i (\alpha)={\cal O}(\varepsilon)
\quad \text{and}\quad 
\tilde{\mathscr{E}}_i (\alpha)={\cal O}(\varepsilon^2),\ 
 \forall \alpha \in {\cal D}.
\end{equation*}
Hence the polynomial $P$ can be written as 
\begin{align*}
P(x) & =  L_i(x)\ast (x-x_i)\ast R_i(x)\\
     & = \big({\cal L}_i(x) +  \mathscr{E}_i(x)  \big)   \ast (x-z_i-\varepsilon_i)  \ast  \big({\cal R}_i(x)  + \tilde{ \mathscr{E}}_i(x)   \big).
\end{align*}

Multiplying both sides of the last equality on the left by $\overline{{\cal L}}_i$ and on the right by  $\overline{{\cal R}}_i$ and evaluating at $x=z_i$, 
 we obtain
\begin{align*}
\bigl(\overline{{\cal L}}_i\ast P \ast \overline{{\cal R}}_i\bigr)(z_i)
&=
\Bigl( 
\overline{{\cal L}}_i 
\ast {\cal L}_i  \ast {\cal R}_i 
\ast 
\overline{{\cal R}}_i \ast (x-z_i -\varepsilon_i)
\Bigr)(z_i)\\
& \qquad +
\Bigl(\overline{{\cal L}}_i  \ast  {\cal R}_i
\ast 
 \overline{{\cal R}}_i
\ast{\mathscr{E}}_i \ast (x-z_i -\varepsilon_i) 
\Bigr)(z_i)\\
& \qquad +
\Bigl(
\overline{{\cal L}}_i  
\ast {\cal L}_i  \ast (x-z_i -\varepsilon_i)  \ast \tilde{\mathscr{E}}_i 
\ast  
\overline{{\cal R}}_i
\Bigr)(z_i)\\
& \qquad +
\Bigl(
\overline{{\cal L}}_i
 \ast {\mathscr{E}}_i \ast (x-z_i -\varepsilon_i) \ast \tilde{{\mathscr{E}}}_i
 \ast 
  \overline{{\cal R}}_i
\Bigr)(z_i),
\end{align*}
where we made use of the fact that ${\cal R}_{i}\ast \overline{{\cal R}}_{i}$ is a real polynomial and hence commutes with any other polynomial. 
Observing that $ \overline{{\cal L}}_i 
\ast {\cal L}_i  \ast {\cal R}_i 
\ast 
\overline{{\cal R}}_i $ is the real polynomial ${\boldsymbol{\cal Q}}_i$,  using again the results  \eqref{evalProduct} and \eqref{evalProductReal} in 
 Theorem~\ref{teonotrootR} and having in mind the form of the remainder polynomials ${\mathscr{E}}_i$ and 
 $\tilde{\mathscr{E}}_i$, we can write
 \begin{align} \label{LPR}
 \bigl(\overline{{\cal L}}_i\ast P \ast \overline{{\cal R}}_i\bigr)(z_i)
& =
 -\varepsilon_i {\boldsymbol{\cal{Q}}}_i(z_i)-
 \bigl( \overline{{\cal L}}_i  \ast  {\cal R}_i
 \ast 
  \overline{{\cal R}}_i
 \ast{\mathscr{E}}_i \bigr)(\hat{z}_i) \varepsilon_i+
 {\cal O}(\varepsilon^2)
\nonumber \\
 &=
 -\varepsilon_i {\boldsymbol{\cal{Q}}}_i(z_i)+ {\cal O}(\varepsilon^2),
 \end{align}
 where $\hat{z}_i=\varepsilon_i z_i \varepsilon_i^{-1}$.
 Multiplying by $\left({\boldsymbol{\cal Q}}_i(z_i)\right)^{-1}$ on the right and observing, once more, that $|\bs{{\cal Q}}_i(z_i)|$ is bounded away from zero, we obtain 
\begin{align*}
\left(\overline{{\cal L}}_i \ast P \ast \overline{{\cal R}}_i \right)(z_i)\,\left({\boldsymbol{\cal Q}}_i(z_i)\right)^{-1} =-\varepsilon_i+{\cal O}(\varepsilon^2)
\end{align*}
or, equivalently, recalling the definition of the errors $\varepsilon_i$,
\begin{align*}
\left(\overline{{\cal L}}_i \ast P \ast \overline{{\cal R}}_i \right)(z_i)\,\left({\boldsymbol{\cal Q}}_i(z_i)\right)^{-1} =z_i-x_i+{\cal O}(\varepsilon^2)
\end{align*}
showing that 
\begin{equation*}
\tilde{z}_i=z_i-\left(\overline{{\cal L}}_i \ast P \ast \overline{{\cal R}}_i \right)(z_i) \, \left({\boldsymbol{\cal Q}}_i(z_i)\right)^{-1} 
\end{equation*}
is an ${\cal O}(\varepsilon^2)$ approximation to $x_i$, 
which is precisely the result that we wanted to establish.
\end{proof}

\begin{remark}\label{serial}
We should observe that, for each $i=1, \ldots, n$,  formula \eqref{scheme} for the computation of the 
approximation $z_i^{(k+1)}$  to $x_i$ involves the polynomials ${\cal R}_i^{(k)}$ and  
${\boldsymbol{\cal Q}}^{(k)}_i$ which make use of the already computed $z_1^{(k+1)}, \ldots, z_{i-1}^{(k+1)}$, i.e. the method here described can be seen as a generalization of the sequential version of the Weierstrass method. 
A careful analysis of the proof, namely the deduction of formula \eqref{LPR}, shows that the use of the updated $z_j^{(k+1)}; j=1, \ldots, i-1,$ when computing $z_i^{(k+1)}$, is essential for establishing the quadratic order of convergence of the method.
\end{remark}

We now show how, with some additional little effort, one can use the iterative scheme \eqref{scheme}--\eqref{calQ} to produce, not only the factor terms, but also the roots of the polynomial.
\begin{theorem}\label{main}
Let $P$ be a monic polynomial  of degree $n$ in $ \HH[x]$ with simple roots and let  $\{z_i^{(k)}\}$ be the  sequences  defined  by  the Weierstrass  iterative scheme \eqref{scheme}--\eqref{calQ} under the assumptions of Theorem~\ref{TeorEsquema}. Finally, let  $\{\zeta_i^{(k)}\}$ be the sequences defined by 
\begin{equation}
\label{zetak}
\zeta_i^{(k+1)}:=\overline{ {\cal R}_i^{(k)}}( z_i^{(k+1)} )\, z_i^{(k+1)}\, \Bigl( \overline{ {\cal R}_i^{(k)}} ( z_i^{(k+1)} )\Bigr)^{-1};\, k=0,1,2, \ldots,
\end{equation}
where ${\cal R}_i^{(k)}$ are the polynomials given by \eqref{polCalR}.
Then,
$\{\zeta_1^{(k)}\}, \ldots, \{\zeta_n^{(k)}  \}$ converge quadratically to the roots  of $P$.
\end{theorem}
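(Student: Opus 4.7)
The plan is to view the defining formula~\eqref{zetak} as a perturbation of the representation of the true root $\zeta_i=\overline{R_i}(x_i)\,x_i\,\bigl(\overline{R_i}(x_i)\bigr)^{-1}$ provided by the preceding lemma, and to show that feeding the quadratically accurate approximations $z_j^{(k+1)}$ into this formula produces an output with error $\mathcal{O}((\varepsilon^{(k)})^2)$, where $\varepsilon^{(k)}=\max_j|x_j-z_j^{(k)}|$ as in the proof of Theorem~\ref{TeorEsquema}.

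First I would verify that $\overline{R_i}(x_i)\neq 0$, so that the conjugation in~\eqref{zetak} is well defined and stable under small perturbations. Using $\overline{P\ast Q}=\overline{Q}\ast\overline{P}$, the conjugate polynomial $\overline{R_i}$ is a $\ast$-product of the linear factors $(x-\overline{x_j})$ for $j=1,\dots,i-1$. Theorem~\ref{theoremResultsZeros}(ii)(a) then gives $\mathbf{Z}_{\overline{R_i}}\subseteq\bigcup_{j<i}[\overline{x_j}]=\bigcup_{j<i}[x_j]$, where the last equality uses that congruence classes depend only on real part and norm. Since the preceding lemma guarantees that the classes $[x_j]$ are pairwise distinct, $x_i\notin[x_j]$ for $j<i$, so $\overline{R_i}(x_i)\neq 0$. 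By continuity the same (with a uniform lower bound) holds for $\overline{{\cal R}_i^{(k)}}(z_i^{(k+1)})$ once $\varepsilon^{(k)}$ is small enough, so its inverse exists and is well controlled.

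Next I would estimate $\overline{{\cal R}_i^{(k)}}(z_i^{(k+1)})-\overline{R_i}(x_i)$. Each $z_j^{(k+1)}=x_j+\mathcal{O}((\varepsilon^{(k)})^2)$ by Theorem~\ref{TeorEsquema}; substituting these into the $\ast$-product defining ${\cal R}_i^{(k)}$, replacing the evaluation point $x_i$ by $z_i^{(k+1)}=x_i+\mathcal{O}((\varepsilon^{(k)})^2)$, and expanding to first order yields
\begin{equation*}
\overline{{\cal R}_i^{(k)}}(z_i^{(k+1)})=\overline{R_i}(x_i)+\mathcal{O}\bigl((\varepsilon^{(k)})^2\bigr),
\end{equation*}
with the same asymptotics for $\bigl(\overline{{\cal R}_i^{(k)}}(z_i^{(k+1)})\bigr)^{-1}$ thanks to the lower bound above. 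Plugging these estimates into~\eqref{zetak} and expanding the triple product $a\,q\,a^{-1}$ to first order gives
\begin{equation*}
\zeta_i^{(k+1)}=\overline{R_i}(x_i)\,x_i\,\bigl(\overline{R_i}(x_i)\bigr)^{-1}+\mathcal{O}\bigl((\varepsilon^{(k)})^2\bigr)=\zeta_i+\mathcal{O}\bigl((\varepsilon^{(k)})^2\bigr),
\end{equation*}
which, combined with the quadratic rate $\varepsilon^{(k+1)}=\mathcal{O}((\varepsilon^{(k)})^2)$ already established in Theorem~\ref{TeorEsquema}, yields the announced quadratic convergence of $\zeta_i^{(k)}$ to $\zeta_i$.

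The only genuinely non-routine step is the verification that $\overline{R_i}(x_i)\neq 0$: this is where the simple-roots hypothesis (distinct congruence classes) is decisive, because in the non-commutative setting the zero-set of a $\ast$-product of linear factors need not reduce to the factor terms themselves, and one must invoke Theorem~\ref{theoremResultsZeros} to localize it inside the union of congruence classes. Once this is in hand, the rest is a direct first-order perturbation calculation, done with due care because of the non-commutativity of quaternionic multiplication.
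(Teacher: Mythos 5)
Your proposal is correct and follows essentially the same route as the paper: it treats \eqref{zetak} as a perturbation of the exact-root formula \eqref{formulaRoots}, feeds in the quadratically accurate $z_j^{(k+1)}$ from Theorem~\ref{TeorEsquema}, and controls the triple product $a\,q\,a^{-1}$ via a first-order estimate of $\overline{{\cal R}_i^{(k)}}(z_i^{(k+1)})$ and its inverse. The only difference is that you explicitly justify $\overline{R_i}(x_i)\neq 0$ through Theorem~\ref{theoremResultsZeros}(ii)(a) and the distinctness of the congruence classes, a point the paper only asserts (``bounded away from zero''), so your write-up is, if anything, slightly more complete.
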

\begin{proof}
We start by first recalling that the roots $\zeta_i$; $i=1,\ldots,n,$ of $P$ are related to $x_i$ in \eqref{fatPolP} through \eqref{formulaRoots}, i.e. 
\begin{equation*}
\zeta_i=\overline{R_i}(x_i)\, x_i\, \bigl(\overline{R_i}(x_i)\bigr)^{-1}; \ i=1, \ldots, n,
\end{equation*}
with $R_i$ defined by \eqref{polRi}.

Next, denote by   
$\varepsilon_i^{(k+1)}$ the errors in the approximations $z_i^{(k+1)}$ to $x_i$
and let
$\varepsilon^{(k+1)}:=\max_i |\varepsilon_i^{(k+1)}|$.
We will show that 
\begin{equation*}
\zeta_i^{(k+1)}=\zeta_i+{\cal O}(\varepsilon^{(k+1)}).
\end{equation*}
This, conjugated with the results of Theorem~\ref{TeorEsquema}, will prove the assertion of the theorem.

Similarly to what we did in the proof of Theorem~\ref{TeorEsquema}, we will simply write $\tilde{z}_i$ for $z_i^{(k+1)}$, $\tilde{\varepsilon}_i$ for $\varepsilon_i^{(k+1)}$, $\tilde{\varepsilon}$ for $\varepsilon^{(k+1)}$, $\tilde{\zeta}_i$ for $\zeta_i^{(k+1)}$ and ${\cal R}_i$ for ${\cal R}_i^{(k)}$.
Taking into account that the polynomials $R_i$ in \eqref{polRi} are exactly the same polynomials presented in \eqref{Ri}, we can write
\begin{equation*}
\overline{R_i}(x) = \overline{{\cal R}_i}(x)+\overline{\tilde{\mathscr{E}}}_i(x)
\end{equation*}
and therefore
\begin{equation*}
\overline{R_i}(x_i)=\overline{\cal{R}}_i(x_i)+{\cal{O}}(\tilde{\varepsilon}). 
\end{equation*}

Expressing $\overline{{\cal R}_i}(x)$ in the expanded form $\sum_{j=1}^{i-1}\bar r_j x^j$, it follows at once that 
\begin{equation}\label{Rixi}
\overline{R_i}(x_i)
= \overline{\cal{R}}_i(\tilde{z}_i+\tilde{\varepsilon}_i)+{\cal O}(\tilde{\varepsilon})  
 = \overline{\cal{R}}_i(\tilde{z}_i)+{\cal O}(\tilde{\varepsilon}).
\end{equation}
Combining the fact that both $|\overline{R_i}(x_i)|$ and $|\overline{\cal{R}}_i(\tilde{z}_i|$ are bounded away from zero with the result \eqref{Rixi}, we can conclude that 
\begin{equation}\label{RixiInv}
\bigl(\overline{R_i}(x_i)\bigr)^{-1}
= \left( \overline{\cal{R}}_i(\tilde{z}_i) \right)^{-1}+{\cal O}(\tilde{\varepsilon}). 
\end{equation}
Finally, result \eqref{formulaRoots} together with \eqref{Rixi}, \eqref{RixiInv} and the assumption \eqref{zetak} gives
\begin{align*}
\zeta_i  & = \overline{R_i}(x_i)\, x_i\,  \bigl(\overline{R_i}(x_i)\bigr)^{-1} \nonumber \\
&  =\Bigl(\overline{{\cal R}_i} (\tilde{z}_i)+{\cal O}(\tilde{\varepsilon})\Bigr) \bigl(\tilde{z}_i+ \tilde{\varepsilon}_i \bigr)
 \Bigl( \bigl(\overline{ {\cal R}_i}(\tilde{z}_i)\bigr)^{-1}+{\cal O}(\tilde{\varepsilon})\Bigr)\nonumber \\
 & = \overline{{\cal R}_i}(\tilde{z}_i)\, \tilde{z}_i\, \bigl(\overline{{\cal R}_i}(\tilde{z}_i)\bigr)^{-1} +{\cal O}(\tilde{\varepsilon})\\ \nonumber
 & =\tilde{\zeta}_i+{\cal O}(\tilde{\varepsilon}),
\end{align*}
which is precisely the result we want to prove.
\end{proof}

\subsection{Computational details}
We now summarize the proposed algorithm for computing the roots of a given  quaternionic unilateral polynomial $P$ of degree $n$ and make some  practical comments regarding its implementation.

\subsubsection*{Quaternionic-Weierstrass algorithm}

{\sc Input:}
\begin{itemize}
\item[-] \texttt{polynomial coefficients}
\item[-] \texttt{initial values $z_i^{(0)}$}
\item[-] \texttt{error tolerances $\varepsilon_1,\varepsilon_2$}
\item[-] \texttt{maximum number of iterations $kmax$}
\end{itemize}
\begin{enumerate}
\item \texttt{Set $\zeta_i^{(0)}=z_i^{(0)}$}\\
\item  \texttt{For $k=1,2,\ldots$ until Stopping Criterion is true}\\
\begin{enumerate}
\item \texttt{Compute $z_i^{(k)}$, by means of \eqref{scheme}-\eqref{calQ}}.\\
\item \texttt{Compute $\zeta_i^{(k)}$,  by means of \eqref{zetak} and  \eqref{polCalR}}.\\
\end{enumerate}
\texttt{Stopping Criterion:\\
\Big($\displaystyle{\max_i} \big| \zeta_i^{(k)}-\zeta_i^{(k-1)}\big| < \varepsilon_1$ and
 $\displaystyle{\max_i} \big| P( \zeta_i^{(k)}) \big| < \varepsilon_2\Big)$
  or $k=kmax$}.
\end{enumerate}
{\sc Ouput:} \texttt{Factors} $\tilde x_i=z_i^{(k)}$ \texttt{and roots} $\tilde\zeta_i=\zeta_i^{(k)}$.

\subsubsection*{Choice of initial approximations}

In the classical case,  Weierstrass method seems in practice to converge from nearly all starting points (see \cite{McNamee2007} and the references therein for details). The numerical experiments that we have conducted also show the robustness of the quaternionic version of the method in what concerns the choice of initial approximations.  In any case, there are some aspects that should be taken into account.

First, for  formula \eqref{scheme} to be meaningful,  a first requirement one has to have in mind when choosing the initial approximations $z_1^{(0)}, \ldots,z_{n}^{(0)}$ is that all of them  belong to distinct congruence classes.
This does not necessarily guarantee that, in the course of the computations, two approximations do not fall into the same congruence class, although this is very unlikely to happen. In such a case, a small perturbation of the initial guesses should be sufficient to regain convergence.

Second, it is, naturally, convenient to select the initial approximations from a region where the $x_i$ in any factorization of the polynomial $P$ are known to lie.  Since the $x_i$ and the roots $\zeta_i$ of $P$ have the same norm, bounds on $|\zeta_i|$ are also valid for $|x_i|$. 
Moreover, since $P(x)\ast\overline{P}(x)$ is a real polynomial\footnote{The use of the polynomial $P(x)\ast\overline{P}(x)$ goes back to the work of Niven \cite{Niven1941}.} 
%The determination of the roots of this polynomial, now usually known as {\sl the companion polynomial of $P$} (see e.g. \cite{JanovskaOpfer2010}), constitutes the first and essential step of the methods successively proposed  by \cite{SerodioSiu2001},  \cite{PogoruiShapiro2004}, \cite{JanovskaOpfer2010} and   \cite{Kalantari2013}.}
, whose roots $r_i$ also have the same norm as the roots $\zeta_i$ of $P$ (this is an immediate consequence of Theorem~\ref{FactorTheorem} in \cite{SerodioSiu2001} and the characterization of the congruence classes given by \eqref{condCongruenceClass}), one can use any known result on bounds on (complex) polynomial roots to obtain a region from where the initial approximations should be selected.
 
\subsubsection*{Non simple zeros}

The proof of Theorem~\ref{TeorEsquema} was done under the assumption that the roots $\zeta_1,\dots,\zeta_n$ of the polynomial 
\eqref{fatPolP} are simple, i.e. that $[\zeta_i]\ne [\zeta_j]$ for all $i \ne j$, or, equivalently, $[x_i]\ne[x_j]$
(cf. Lemma~\ref{fatPolP}). When $[x_i]=[x_j]$ for some $i \ne j$, the characterization of the zero-set of the polynomial
can be done taking into account the following two results.

\begin{lemma}\label{lemmaPermute2Factors}
If $x_1,x_2 \in \HH$ and $h=\overline{x}_2-x_1$, then
\begin{equation*}
(x-x_2)\ast(x-x_1)=
\begin{cases}
(x-h^{-1}x_1 h) \ast (x-h^{-1} x_2 h), & \text{if }h\neq0\\
(x-x_1)\ast(x-x_2), & \text{if }h=0.
\end{cases}
\end{equation*}
\end{lemma}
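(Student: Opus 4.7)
The plan is to verify the identity by expanding both sides as explicit quadratic polynomials under the $\ast$-product defined in Section~2, and then to reduce the coefficient comparison to two short commutation checks that exploit the centrality of real scalars in $\HH$. Applying the convolution rule for the $\ast$-product gives
\begin{equation*}
(x-x_2)\ast(x-x_1)=x^2-(x_1+x_2)\,x+x_2 x_1,
\end{equation*}
and, in the case $h\neq 0$, setting $y_j:=h^{-1}x_j h$,
\begin{equation*}
(x-y_2)\ast(x-y_1)=x^2-h^{-1}(x_1+x_2)h\,x+h^{-1}(x_1 x_2)h.
\end{equation*}
Matching the linear and constant coefficients, the identity is equivalent to the two assertions that $h$ commutes with $x_1+x_2$ and that $h^{-1}x_1 x_2 h=x_2 x_1$, i.e.\ that $x_1 x_2\,h=h\,x_2 x_1$.

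Both commutations will follow from a short expansion using $h=\overline{x}_2-x_1$ together with the standard facts $x_2+\overline{x}_2=2\re x_2\in\R$ and $x_2\overline{x}_2=|x_2|^2\in\R$, so that these quantities are central in $\HH$. Expanding $h(x_1+x_2)-(x_1+x_2)h$, the $x_1^2$ and cross-conjugate terms cancel, leaving $(\overline{x}_2+x_2)x_1-x_1(\overline{x}_2+x_2)=0$ by centrality of $2\re x_2$. For the multiplicative relation, both $h\,x_2 x_1$ and $x_1 x_2\,h$ collapse to the same expression $|x_2|^2 x_1 - x_1 x_2 x_1$, again by centrality of $|x_2|^2$.

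In the remaining case $h=0$, the hypothesis $\overline{x}_2=x_1$ identifies both sides of the claimed equality with the characteristic polynomial $\mathcal{Q}_{x_2}(x)=(x-x_2)\ast(x-\overline{x}_2)$, which is real; the identity is then immediate and is precisely the statement that two conjugate linear factors commute. The whole argument is routine symbolic manipulation and I do not expect a genuine obstacle: the only delicate point is keeping the non-commutative order right when verifying $x_1 x_2\,h=h\,x_2 x_1$, but the underlying phenomenon — that $q+\overline{q}$ and $q\overline{q}$ are real for every $q\in\HH$ — does all the work.
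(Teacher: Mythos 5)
Your proof is correct and is precisely the ``simple manipulation'' the paper's one-line proof alludes to: direct expansion of both sides under the $\ast$-product, reducing the claim to the commutation of $h=\overline{x}_2-x_1$ with $x_1+x_2$ and the relation $x_1x_2\,h=h\,x_2x_1$, both of which you verify correctly from the centrality of $x_2+\overline{x}_2$ and $x_2\overline{x}_2$ (the case $h=0$ being the standard fact that conjugate linear factors commute). One small labeling slip: the product you expand should be written $(x-y_1)\ast(x-y_2)$, which is the lemma's right-hand side and is what your displayed coefficients $-h^{-1}(x_1+x_2)h$ and $h^{-1}x_1x_2h$ actually correspond to, not $(x-y_2)\ast(x-y_1)$.
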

\begin{proof}
The result follows by simple manipulation; see also \cite{SerodioSiu2001} for a different, but equivalent result.
\end{proof}

\begin{lemma}\label{lemmaFactQuadratic}
Consider a quadratic polynomial factorized in the form
\begin{equation*}
P(x)=(x-x_2)\ast(x-x_1),
\end{equation*}
where $x_1,x_2 \in \HH \setminus \R$ and  $[x_1]=[x_2]$.

\begin{enumerate}
\item[(i)]  If $x_1 \ne \overline{x}_2$, then the only zero of $P$ is $x_1$.

\item[(ii)]  If $x_1 = \overline{x}_2$, then $x_1$ generates the sphere of zeros $[x_1] $, i.e. $x_1$ is a spherical zero.
\end{enumerate}
\end{lemma}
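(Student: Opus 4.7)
The plan is to exploit Theorem \ref{theoremResultsZeros}(ii), which confines every zero of $P$ to $[x_1] \cup [x_2] = [x_1]$, and then to compare $P$ with the characteristic polynomial $\mathcal{Q}_{x_1}(x) = x^2 - 2\re(x_1) x + |x_1|^2$, which vanishes identically on that common congruence class. All of the real work then boils down to a single polynomial identity expressing $P$ as $\mathcal{Q}_{x_1}$ plus a correction controlled by $h := \overline{x}_2 - x_1$, the same quantity appearing in Lemma \ref{lemmaPermute2Factors}.

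First I would expand the $\ast$-product in standard form, $P(x) = x^2 - (x_1+x_2)\,x + x_2 x_1$, and verify the identity
\begin{equation*}
P(x) \;=\; \mathcal{Q}_{x_1}(x) + h\,(x - x_1).
\end{equation*}
Matching the coefficient of $x$ reduces to $2\re(x_1) - (x_1+x_2) = h$, which is immediate from $\re x_1 = \re x_2$; matching the constant term reduces to $x_2 x_1 - |x_1|^2 = -h\,x_1$, which is a short manipulation of the vector parts of $x_1$ and $x_2$ using $v^2 = -|v|^2$ for any vector quaternion $v$. This is the only genuinely computational step of the argument.

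With the identity in hand, both conclusions follow at once. For any zero $q$ of $P$, Theorem \ref{theoremResultsZeros}(ii) places $q$ in $[x_1]$, hence $\mathcal{Q}_{x_1}(q)=0$, and the identity specialises to $P(q) = h\,(q - x_1)$. In case (i), the hypothesis $x_1 \ne \overline{x}_2$ means $h \ne 0$, so $h$ is invertible in $\mathbb{H}$ and $P(q)=0$ forces $q = x_1$; since $x_1$ is indeed a zero of $P$ by the Factor Theorem, it is the unique zero. In case (ii), $h = 0$, so $P(q) = 0$ for every $q \in [x_1]$, i.e.~$[x_1] \subseteq \mathbf{Z}_P$, which by definition says that $x_1$ is a spherical zero generating the sphere $[x_1]$.

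I expect the main obstacle to be verifying the identity cleanly, since the non-commutativity forces one to be careful with the order of factors and with signs; both hypotheses $\re x_1 = \re x_2$ and $|x_1| = |x_2|$ contained in $[x_1]=[x_2]$ enter, the first in the identity itself and the second (implicitly) in reducing $\mathbf{Z}_P$ to $[x_1]$. A variant worth mentioning, though not shorter, would be to invoke Lemma \ref{lemmaPermute2Factors} directly in case (i) to obtain the alternative factorisation $P = (x - h^{-1} x_1 h) \ast (x - h^{-1} x_2 h)$ and then check, using that $x_1$ satisfies $\mathcal{Q}_{x_1}(x_1) = 0$, that $h^{-1} x_2 h = x_1$; this exhibits $x_1$ as a factor term on the right, but extracting uniqueness of the root from this still ultimately rests on the same algebraic input.
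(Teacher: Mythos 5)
Your argument is correct, and it is genuinely self-contained, whereas the paper does not prove this lemma at all: it simply cites Gentili--Stoppato--Struppa \cite{GentiliStoppatoStruppa2013}. Your key identity checks out, and in fact it can be obtained without any coefficient matching: since $\mathcal{Q}_{x_1}(x)=(x-\overline{x}_1)\ast(x-x_1)$, subtracting gives
\begin{equation*}
P(x)-\mathcal{Q}_{x_1}(x)=\bigl(\overline{x}_1-x_2\bigr)\ast(x-x_1),
\end{equation*}
and $\overline{x}_1-x_2=\overline{x}_2-x_1=h$ precisely because $\re x_1=\re x_2$; this makes transparent which hypothesis enters where (the equality of real parts in the identity, the equality of norms only in confining $\mathbf{Z}_P$ to $[x_1]$ via Theorem~\ref{theoremResultsZeros}(ii)(a) and $\mathbf{Z}_{\mathcal{Q}_{x_1}}=[x_1]$). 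The evaluation step is legitimate because evaluation is additive and the correction term $h\,x-h\,x_1$ has a constant left coefficient, so it evaluates to $h(q-x_1)$ with no non-commutativity issue; in case (i) you correctly supply $x_1\in\mathbf{Z}_P$ from Theorem~\ref{teonotrootR}(i), and in case (ii) $h=0$ makes $P=\mathcal{Q}_{x_1}$, giving the whole sphere. Your closing variant via Lemma~\ref{lemmaPermute2Factors} also works (one checks $x_2h=hx_1$ using both the real-part and norm equalities), but as you say it yields no shortcut. Compared with the paper's citation, your route buys a short elementary proof entirely within the toolkit already developed in Section~2, at the cost of one small computation.
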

\begin{proof}
See e.g. \cite{GentiliStoppatoStruppa2013}.
\end{proof}

The problem of finding a natural definition of multiplicity for zeros of quaternionic polynomials is a rather complicated task and as a consequence one can find in the literature different (not always equivalent, see \cite{FalcaoMirandaSeverinoSoares2017}) concepts of multiplicity \cite{Beck1979,Bolotnikov2015,GentiliStoppato2008b,GentiliStruppa2008,PereiraVettori2006,Topuridze2009}. In the case (i) above, we will say that $x_1$ is a root with (\textit{isolated}) \textit{multiplicity} equal to two.
For example, the polynomials $(x+1-\qi)\ast (x+1+\qk)$ and $(x+1+\qk)\ast (x+1+\qk)$ both have $\zeta=-1-\qk$ as a root with multiplicity two.

Returning to the case of a general polynomial of degree $n$ of the form \eqref{fatPolP}, we consider, for simplicity, that 
%there are only two roots in the same congruence class, say $\zeta_i\sim\zeta_j$ or, equivalently, we assume that 
$[x_i]$ and $[x_j]$ are the only non-distinct congruent classes. Using Lemma~\ref{lemmaPermute2Factors}, we can freely move the factors $(x-x_i)$ and $(x-x_j)$ to the right of the factorization without changing the set of congruence classes so that in the new factorization 
\begin{align*}
P(x)&=(x-y_n) \ast (x-y_{n-1}) \ast \dots \ast (x-y_2) \ast (x-y_1)\\
&=Q(x) \ast (x-y_2)\ast(x-y_1) 
\end{align*}
we have $[y_1]=[y_2]$. Observe that all  the roots of the $n-2$ degree polynomial $Q$ are simple and, therefore, the complete characterization of the roots of $P$ can be done applying Lemma~\ref{lemmaFactQuadratic} to the quadratic polynomial 
$(x-y_2)\ast(x-y_1)$.

We considered the application of the quaternionic Weierstrass  method to several examples of polynomials having double (isolated) or spherical roots and, in all the cases, we have observed the following:
when $y_1$ is a double isolated root ($y_1 \ne \overline{y}_2$), the behavior is analogous to the one observed in the classical case, i.e. the rate of convergence drops to one; on the other 
hand,  if $y_1$ is a spherical root, the iterative scheme produces two distinct roots $\zeta_1$ and $\zeta_2$ belonging to the congruence class $[y_1]$ and still shows a quadratic order of convergence. 

\section{Numerical examples}
In this section we present several examples illustrating the performance of the quaternionic Weierstrass method introduced in Section 3. 

All the numerical experiments here reported  were obtained by the use of the Mathematica add-on application \texttt{QuaternionAnalysis} \cite{MirandaFalcao2014} designed by two of the authors of this paper for symbolic manipulation of quaternion valued functions. 
A collection of new functions, including an implementation of the Weierstrass method described in this paper, has been recently developed in order to endow the aforementioned package with the ability to perform operations in  the non-commutative ring of polynomials $\HH [x]$.

\begin{example} \label{ex1}
Our first test example is a polynomial which fulfills the assumptions of Theorem~\ref{main}. 
In fact, it is easy to see that the polynomial
\begin{equation}\label{FactEx1}
	P(x)=(x+2\qi)\ast(x+1+\qk)\ast(x-2)\ast(x-1)\ast(x-2+\qj)\ast(x-1+\qi),
\end{equation}
has only simple roots, namely
\begin{align*}
	\zeta_1&=1-\qi,&  \zeta_2&=1,& \zeta_3&=-1-\tfrac{29}{39}\qi+\tfrac{14}{39}\qj-\tfrac{22}{39}\qk,\\  
	\zeta_4&=2,&\zeta_5&=-\tfrac{224}{113}\qi-\tfrac{30}{113}\qk,& \zeta_6&=2-\tfrac23\qi-\tfrac13\qj+\tfrac23\qk.
\end{align*}
Since, in this case, the polynomial roots $\zeta_i$ are known exactly, we replace the stopping criterion based on the incremental size of the iterations
by the following one: 
\begin{equation*}
\epsilon^{(k)}:=\max_i \{\epsilon_i^{(k)}\} < \varepsilon_1, \text{ with } \epsilon_i^{(k)}:=|\zeta_i^{(k)}-\zeta_{\pi_k(i)}|,
\end{equation*}
where $\pi_k$ is an appropriate permutation of $\{1,\dots,6\}$. 
Here, we considered $\varepsilon_1=\varepsilon_2=10^{-16}$ and chose initial approximations so that $\epsilon^{(0)}\le 0.5$.

The Weierstrass method applied to the extended form of $P$ produced, after 5 iterations, the following approximations to the factor terms (with 15 decimal places\footnote{The notation $(0)$ after the decimal point represents a sequence of 15 zeros.})
{\small
\begin{align*}
x_1^{(5)}&=1.(0)-1.(0)\qi\\
x_2^{(5)}&=1.(0)\\
x_3^{(5)}&=-1.(0)-0.545454545454545\qi-0.181818181818182\qj-0.818181818181818\qk\\
x_4^{(5)}&=2.(0)\\
x_5^{(5)}&=-1.587878787878788\qi-0.911515151515152\qj+0.804848484848485\qk\\
x_6^{(5)}&=2.(0)+0.133333333333333\qi+0.093333333333333\qj-0.986666666666667\qk
\end{align*}}
corresponding to the approximate roots
{\small
\begin{align*}
\zeta_1^{(5)}&=1.(0)-1.(0)\qi\\
\zeta_2^{(5)}&=1.(0)\\
\zeta_3^{(5)}&=-1.(0)-0.743589743589744\qi+0.358974358974359\qj-0.564102564102564\qk\\
\zeta_4^{(5)}&=2.(0)\\
\zeta_5^{(5)}&=-1.982300884955752\qi-0.265486725663717\qk\\
\zeta_6^{(5)}&=2.(0)-0.666666666666667\qi-0.333333333333333\qj+0.666666666666667\qk
\end{align*}}

It is interesting to observe that the approximations $x_i^{(5)}$ to the factor terms lead to a factorization of $P$ different from  \eqref{FactEx1}, but of course in line with Theorem~\ref{theoremResultsZeros}-(iii).

Table~\ref{table1} contains the relevant information concerning the errors in the successive approximations $\zeta_i^{(k)}$ ($k=0,\dots,5$, $i=1,\dots,6$) to the roots $\zeta_i$ of $P$. 
Estimates $\rho$ for the computational local order of convergence of the method, based on the use of (see e.g. \cite{SanchezNogueraGrauHerrero2012} for details).
\begin{equation*} %\label{rho}
\rho\approx \rho^{(k)}:=\frac{\log\epsilon^{(k)}}{\log\epsilon^{(k-1)}}
\end{equation*}
were also computed and are included in the last column of the table.

\begin{table}%[!htbp]
\caption{Quaternionic Weierstrass method for Example~\ref{ex1}}\label{table1}
\begin{center}
\begin{tabular}{|l||l|l|l|l|l|l||l|}
\hline&&&&&&&\\[-2ex]	
$k$&\quad $\epsilon_1^{(k)}$ &\quad $\epsilon_2^{(k)}$&\quad $\epsilon_3^{(k)}$&\quad $\epsilon_4^{(k)}$&\  $\epsilon_5^{(k)}$&\quad $\epsilon_6^{(k)}$&\  $\rho^{(k)}$\\ 
\hline
\hline&&&&&&&\\[-2ex]
$0$&$2.7\,\text{e}{-1}$ &$6.0\,\text{e}{-2}$  &$4.5\,\text{e}{-1}$  &$2.0\,\text{e}{-2}$  &$8.0\,\text{e}{-2}$  &$3.3\,\text{e}{-1}$  & --\\
$1$&$9.3\,\text{e}{-2}$&$1.8\,\text{e}{-2}$ &$7.7\,\text{e}{-2}$ &$7.2\,\text{e}{-3}$ &$6.0\,\text{e}{-2}$ &$3.1\,\text{e}{-2}$ & 1.36\\
$2$&$7.9\,\text{e}{-3}$&$1.9\,\text{e}{-3}$ &$5.7\,\text{e}{-3}$ &$5.6\,\text{e}{-4}$ &$5.3\,\text{e}{-3}$ &$1.1\,\text{e}{-3}$ & 2.03\\
$3$&$6.0\,\text{e}{-5}$&$2.0\,\text{e}{-5}$ &$4.0\,\text{e}{-5}$ &$4.5\,\text{e}{-6}$ &$9.2\,\text{e}{-6}$ &$3.1\,\text{e}{-7}$ & 2.17\\
$4$&$2.4\,\text{e}{-9}$&$1.5\,\text{e}{-9}$ &$3.3\,\text{e}{-9}$ &$3.7\,\text{e}{-10}$&$2.0\,\text{e}{-9}$ &$2.2\,\text{e}{-13}$& 2.04\\
$5$&$1.5\,\text{e}{-17}$&$5.3\,\text{e}{-18}$&$1.6\,\text{e}{-17}$&$6.1\,\text{e}{-19}$&$7.5\,\text{e}{-18}$&$8.1\,\text{e}{-26}$& 2.06\\
\hline
\end{tabular}
\end{center}
\end{table}

In order to illustrate Remark~\ref{serial} we have also implemented the parallel version of Weierstrass method. In this case, using the same initial guesses, 9 iterations were required to achieve the same precision. 
The results presented in Table~\ref{table2} clearly indicate the deterioration of the speed of convergence of this version of the method.

\begin{table}%[!htbp]	
\caption{Parallel version of Weierstrass method for Example~\ref{ex1}}\label{table2} 
\begin{center}
\begin{tabular}{|l||l|l|l|l|l|l||l|}
   \hline&&&&&&&\\[-2ex]	
$k$&\quad $\epsilon_1^{(k)}$& $\ \epsilon_2^{(k)}$&\quad $\epsilon_3^{(k)}$&\quad $\epsilon_4^{(k)}$&\quad $\epsilon_5^{(k)}$&\quad $\epsilon_6^{(k)}$ &\ $\rho^{(k)}$\\ 
\hline
\hline&&&&&&&\\[-2ex]
$0$&$2.7\,\text{e}{-1}$ &$6.0\,\text{e}{-2}$  &$4.5\,\text{e}{-1}$  &$2.0\,\text{e}{-2}$  &$8.0\,\text{e}{-2}$  &$3.3\,\text{e}{-1}$  & --\\
$1$&$9.3\,\text{e}{-2}$&$2.3\,\text{e}{-2}$ &$8.6\,\text{e}{-2}$ &$5.9\,\text{e}{-3}$ &$5.3\,\text{e}{-3}$ &$9.8\,\text{e}{-2}$ & 3.26\\
$2$&$1.7\,\text{e}{-2}$&$4.4\,\text{e}{-3}$ &$2.8\,\text{e}{-2}$ &$2.0\,\text{e}{-3}$ &$2.0\,\text{e}{-2}$ &$3.3\,\text{e}{-2}$ & 1.47\\
$3$&$1.0\,\text{e}{-3}$&$4.3\,\text{e}{-4}$ &$3.1\,\text{e}{-3}$ &$3.1\,\text{e}{-4}$ &$2.9\,\text{e}{-3}$ &$4.2\,\text{e}{-3}$ & 1.60\\
$4$&$5.0\,\text{e}{-5}$&$1.9\,\text{e}{-5}$ &$1.5\,\text{e}{-4}$ &$1.2\,\text{e}{-5}$ &$2.4\,\text{e}{-4}$ &$3.0\,\text{e}{-4}$ & 1.48\\
$5$&$2.2\,\text{e}{-7}$ &$1.1\,\text{e}{-7}$ &$2.1\,\text{e}{-6}$ &$1.0\,\text{e}{-7}$ &$7.0\,\text{e}{-6}$ &$7.9\,\text{e}{-6}$ & 1.44\\
$6$&$4.5\,\text{e}{-10}$&$2.2\,\text{e}{-10}$&$5.6\,\text{e}{-8}$ &$2.5\,\text{e}{-10}$&$6.0\,\text{e}{-7}$ &$1.9\,\text{e}{-6}$ & 1.12\\
$7$&$2.5\,\text{e}{-13}$&$1.2\,\text{e}{-13}$&$9.4\,\text{e}{-11}$&$2.0\,\text{e}{-13}$&$1.2\,\text{e}{-8}$ &$8.6\,\text{e}{-9}$ & 1.39\\
$8$&$1.6\,\text{e}{-18}$&$1.6\,\text{e}{-18}$&$1.6\,\text{e}{-15}$&$3.7\,\text{e}{-18}$&$2.4\,\text{e}{-12}$&$3.0\,\text{e}{-12}$& 1.45\\
$9$&$2.0\,\text{e}{-25}$&$1.7\,\text{e}{-25}$&$7.9\,\text{e}{-21}$&$5.5\,\text{e}{-25}$&$1.5\,\text{e}{-17}$&$2.3\,\text{e}{-17}$& 1.44\\
\hline
\end{tabular}
\end{center}
\end{table}
\end{example}

Our next examples concern situations where the polynomials under consideration have zeros which are not simple.

\begin{example} \label{ex2}
The polynomial 
\begin{equation*} 
P(x)=x^4+(-1+\qi)x^3+(2-\qi+\qj+\qk)x^2+(-1+\qi)x+1-\qi+\qj+\qk,
\end{equation*}
has, apart from the isolated zeros  $-\qi+\qk$ and  $1-\qk$, a 
whole sphere  of zeros,  $[\,\qi\,]$.
In this case, since all the spherical roots have the same  real part and modulus,  we replaced the stopping criterion used in the previous example by the following one:
\begin{equation*} 
\epsilon^{(k)}=\max\{\epsilon_R^{(k)},\epsilon_N^{(k)}\}< 10^{-16},
\end{equation*}
where
\begin{equation*}
\epsilon_R^{(k)}  := \max_i\{\re(\zeta_i^{(k)})-\re(\zeta_{\pi_k(i)})\}\quad \text{and}\quad
\epsilon_N^{(k)} := \max_i\{\big||\zeta_i^{(k)}|-|\zeta_{\pi_k(i)}|\big|\}.
\end{equation*}
	
Starting with an initial guess chosen so that $\epsilon^{(0)}\le 0.15$, we obtained, after 5 iterations, the following approximations:
{\small
\begin{align*}
\zeta_1^{(5)}&=0.099934477851162\qi -0.917198737816235\qj -0.385693629043728\qk\\
\zeta_2^{(5)}&= -0.799427021998164\qi -0.519295977566198\qj -0.302073044449043\qk\\
\zeta_3^{(5)}&= 1.(0)-1.(0)\qj \\
\zeta_4^{(5)}&= -1.(0)\qi+1.(0)\qk
\end{align*}}

The spherical root can be identified at once by observing that, up to the required precision, we have $[\zeta_1^{(5)}]=[\zeta_2^{(5)}]$, since $\re \zeta_1^{(5)}=\re \zeta_2^{(5)}=0$ and $|\zeta_1^{(5)}|=|\zeta_2^{(5)}|=1$.
 
The numerical details related to this example are displayed in Table~\ref{table3}. Here the numerical computations have been carried  out with the precision increased to 512 significant digits.

\begin{table} %[!htbp]
\caption{Weierstrass method for spherical roots - Example~\ref{ex2}}\label{table3}
\begin{center}
\begin{tabular}{|l||l|l|l|l||l|}
\hline&&&&&\\[-2ex]	
$k$&\quad $\epsilon_1^{(k)}$&\quad $\epsilon_2^{(k)}$&\quad $\epsilon_3^{(k)}$&\quad $\epsilon_4^{(k)}$&\ $\rho^{(k)}$\\ 
\hline
\hline&&&&&\\[-2ex]
$0$&$1.3\,\text{e}{-2}$ &$7.1\,\text{e}{-2}$ &${7.6\,\text{e}{-2}}$ &${1.3\,\text{e}{-1}}$ & --\\
$1$&$6.3\,\text{e}{-3}$ &$4.5\,\text{e}{-2}$ &${6.1\,\text{e}{-3}}$ &${1.1\,\text{e}{-2}}$ & 1.52\\
$2$&$1.2\,\text{e}{-4}$ &$9.8\,\text{e}{-4}$ &${1.4\,\text{e}{-3}}$ &${9.7\,\text{e}{-5}}$ & 2.11\\
$3$&$9.6\,\text{e}{-8}$ &$1.1\,\text{e}{-6}$ &${2.6\,\text{e}{-6}}$ &${1.8\,\text{e}{-8}}$ & 1.96\\
$4$&$9.2\,\text{e}{-12}$ &$6.1\,\text{e}{-11}$ &${1.6\,\text{e}{-11}}$ &${1.0\,\text{e}{-15}}$ & 1.83\\
$5$&$1.4\,\text{e}{-22}$&$9.7\,\text{e}{-22}$&${4.9\,\text{e}{-21}}$&${8.0\,\text{e}{-31}}$& 1.99\\
\hline
\end{tabular}
\end{center}
\end{table}

As we can observe from Table~\ref{table3}, the quaternionic Weierstrass method works, produces {\sl all} the roots simultaneously with machine precision and exhibits quadratic order of convergence.\
As expected, for the case of the spherical root, we obtain convergence to two distinct members of the sphere of zeros.
\end{example}
\begin{example}\label{ex3}
In our last example we address the problem of using Weierstrass method in cases where the polynomial under consideration has multiple (isolated) roots. The polynomials
\begin{equation*}
P(x)=(x-\qi)\ast(x+1+\qk)\ast(x+1+\qk) \text{ and } Q(x)=(x-\qi)\ast(x+1-\qi)\ast(x+1+\qk)
\end{equation*}
have one non-real root with multiplicity one and $-1-\qk$ as a double root. The approximations to the roots of $P$ obtained by the use of the quaternionic Weierstrass method are
\begin{align*}
\zeta_1=&-1.(0) -1.(0)\qk \\
\zeta_2=& -0.230769230769231\qi -0.307692307692308\qj-0.923076923076923\qk\\
\zeta_3=&-1.(0)  -1.(0)\qk
\end{align*}
while, for the roots of $Q$, we obtained
{\small
\begin{align*}
\zeta_1=& 0.333333333333333\qi -0.666666666666667\qj -0.666666666666667\qk\\
\zeta_2=&-1.(0) -1.(0)\qk \\
\zeta_3=&-1.(0)  -1.(0)\qk
\end{align*}}
 As we can observe from Table~\ref{table4}, the behavior of the quaternionic Weierstrass method is very similar to that one observed for the classical complex case, where the rate of convergence is linear. This table shows $\epsilon^{(k)}$ for the last 9 iterations of the  method together with $\rho^{(k)}$ for both polynomials.

\begin{table} %[!htbp]
\caption{Weierstrass method for double roots - Example~\ref{ex3}}\label{table4}
\begin{center}
\begin{tabular}{|l|l||l|l|}
\hline
\multicolumn{2}{|c||}{$P$}&\multicolumn{2}{|c|}{$Q$}\\
\hline
&&&\\[-2ex]
\quad $\epsilon^{(k)}$ &\  $\rho^{(k)}$ &\quad  $\epsilon^{(k)}$ &\  $\rho^{(k)}$\\
\hline
&&&\\[-2ex]
$6.8\,\text{e}{-10}$& 1.05 &$1.9\,\text{e}{-13}$ & 1.03 \\ 
$2.6\,\text{e}{-10}$& 1.04 &$7.3\,\text{e}{-14}$ & 1.03 \\ 
$1.0\,\text{e}{-10}$& 1.04 &$2.8\,\text{e}{-14}$ & 1.03 \\ 
$3.9\,\text{e}{-11}$& 1.04 &$1.1\,\text{e}{-14}$ & 1.03 \\ 
$1.5\,\text{e}{-11}$& 1.04 &$4.1\,\text{e}{-15}$ & 1.03 \\ 
$5.9\,\text{e}{-12}$& 1.04 &$1.6\,\text{e}{-15}$ & 1.03 \\ 
$2.3\,\text{e}{-12}$& 1.04 &$6.1\,\text{e}{-16}$ & 1.03 \\ 
$8.8\,\text{e}{-13}$& 1.04 &$2.4\,\text{e}{-16}$ & 1.03 \\ 
$3.4\,\text{e}{-13}$& 1.03 &$9.1\,\text{e}{-17}$ & 1.03 \\ 

\hline 
\end{tabular} 
\end{center}
\end{table}

\end{example}
 
\section{Final Remarks}

In this paper we proposed a  generalization to the  quaternionic context of the well-known Weierstrass method for approximating {\sl all} zeros of a polynomial simultaneously. 
Due to the structure of the zero-set of a quaternionic polynomial, the claim that the method we have proposed produces {\sl all} the zeros simultaneously, requires an additional explanation.
Assuming the convergence of the method to  the roots $\zeta_1,\dots,\zeta_n$ of a polynomial $P$ of degree $n$, it is easy to identify ${\mathbf{Z}}_P$, once we test if each element of $\{\zeta_1,\dots,\zeta_n\}$ is an isolated or a spherical zero of $P$ (cf.~Theorem~\ref{teorSphericalRoots}).

The quaternionic Weierstrass algorithm is entirely based on quaternionic arithmetic and shows fast convergence for simple and spherical roots.
We proved the quadratic convergence of the sequential iterative scheme, under the assumptions that all the roots of the polynomial are simple, and presented  numerical examples supporting this fact.
In \cite{Falcao2014}, it was proved that the same rate of convergence can be achieved by quaternion versions of Newton's method for the so-called radially holomorphic functions \cite[p.~234]{GurlebeckHabethaSprossig2008}.
None of the polynomials presented in this section are radially holomorphic or are in the less restrictive conditions of \cite[Theorem~4]{Falcao2014}.
As far as we are aware, the method proposed in this paper is the first numerical method entirely based on quaternionic arithmetic for which we can observe theoretical and experimental results for general unilateral quaternion polynomials.

Several authors, namely Petkovic and collaborators (see e.g. \cite{Petkovic2008}), have described conditions for the safe convergence of the classical method depending only on the initial approximations.  This is a very interesting question that we intend to address in the near future, in the quaternionic case.

One can find in the literature several modifications to the classical Weierstrass method which improve the speed of convergence to multiple roots (see e.g. \cite{Fraigniaud1991}). It is also in our plans of research to consider adaptations of such strategies.

\section*{Acknowledgments}
Research at CMAT was financed by Portuguese Funds through FCT - Funda\c{c}\~{a}o para a Ci\^{e}ncia e a Tecnologia - within the Project UID/MAT/00013/2013. Research at NIPE has been carried out within the funding with
COMPETE reference number POCI-01-0145-FEDER-006683, with the FCT/MEC's financial support
through national funding and by the ERDF through the Operational Programme
on ``Competitiveness and Internationalization -- COMPETE 2020" under the
PT2020 Partnership Agreement.

%\bibliographystyle{abbrv}
%\bibliography{bibliografia}

\end{document}